\tikzstyle{vertex}=[circle, draw, inner sep=0pt, minimum size=3pt]
\newtheorem{theorem}{Theorem}[section]
\newtheorem{lemma}[theorem]{Lemma}
\theoremstyle{definition}
\newtheorem{example}[theorem]{Example}
\newtheorem{remark}[theorem]{Remark}
\newcommand{\Tau}{\mathcal{T}}
\newcommand{\bz}{\mathbb{Z}}
\newcommand{\CS}{\mathrm{CS}}
\begin{document}
\title{Total perfect codes in Cayley sum graphs of cyclic groups}
\author{ Masoumeh Koohestani$^{\,\rm a}$
\quad Doost Ali Mojdeh  $^{\,\rm a,}$\thanks{Corresponding author: \tt dmojdeh@umz.ac.ir}  \quad
 Mohsen Ghasemi $^{\,\rm b}$ 
\\[.3cm]
{\sl\normalsize $^{\rm a}$ Department of Mathematics, Faculty of Mathematical Sciences, }\\
{\sl\normalsize University of Mazandaran, Babolsar, Iran}\\
{\sl\normalsize $^{\rm b}$Department of Mathematics, Urmia University,}\\
{\sl\normalsize  P. O. Box 575615-1818, Urmia, Iran}
}
\maketitle
\footnote{{\em E-mail Addresses}: {\tt  m.koohestani@umail.umz.ac.ir,
m.ghasemi@urmia.ac.ir }}

\maketitle
\begin{abstract} 
In this paper, we consider Cayley sum graphs over the cyclic group $\mathbb{Z}_n$
and aim to explore several necessary and sufficient conditions for the existence of total perfect codes in these graphs. Specifically, we examine various cases for the connection set of the graph, including when it is periodic, aperiodic, or square-free. To this end, we utilize a correspondence that we first establish between total perfect codes and factorizations of groups, along with their algebraic properties. We then generalize some of these conditions to the direct product of cyclic groups, i.e. 
\texorpdfstring{$\bz _{n_1} \times\dots \times \bz_{n_d}$.}

\vspace{5mm}
\\
\noindent {\bf Keywords:} Cayley sum graph, cyclic group,
total perfect code \\[.1cm]
\noindent {\bf AMS Mathematics Subject Classification\,(2020):}   05C25, 05C69, 94B99
\end{abstract}

\section{Introduction}
Since the advent of coding theory in the late 1940s, perfect codes have been fundamental to the development of the information theory \cite{Heden-2008, Lint-1975}. The importance of perfect codes is well recognized, with Hamming and Golay codes among noteworthy examples. The notion of perfect codes can be naturally extended to the context of graphs \cite{Biggs-1973}. In graph theory, perfect codes are also referred to as efficient dominating sets \cite{Dejter-2003,Knor-2012} or independent perfect dominating sets \cite{Lee-2001}. In the specific context of Cayley sum graphs, the study of perfect codes and total perfect codes has recently attracted substantial attention \cite{Zhang-2024, Wang-2024}.

Let $\Gamma$  be a graph with vertex set $V (\Gamma)$ and edge set $E(\Gamma)$.  A \textit{perfect code}
 in $\Gamma$ is a subset $C$ of $V(\Gamma)$ such that no two vertices in $C$ are adjacent and every vertex in $V(\Gamma)\setminus C$ is adjacent to exactly one vertex in $C$. Also, $C$ is called a \textit{total perfect code} in  $\Gamma$,  if every vertex in  $V (\Gamma)$ has exactly one neighbor in $C$. 
Let $G$ be a group under addition with the identity element 0. An element $x$ of $G$ is called a \textit{square} if $x = y+y$, for some element
$y \in G$. A subset of $G$ is called a  \textit{square-free subset} of $G$ if it is a set without squares. A
subset $S$ of $G$ is called a  \textit{normal subset} if $S$ is a union of some conjugacy classes of $G$ or
equivalently, for every $g \in G$, $-g+S+g := \{-g+s+g : s \in S\} = 
S$. Remark that any subset of an
abelian group is normal. Let $S$ be a a normal subset of $G$. The  \textit{Cayley sum graph} ${\rm CS}(G, S)$ of $G$ with respect to the
 \textit{connection set} $S$ is a graph whose vertex set is $G$ and  two vertices $g$ and $h$ being
adjacent if and only if $g+h \in S$ and $g \neq h$. 
A perfect code (total perfect code) of  ${\rm CS}(G, S)$ which is also a subgroup of $G$ is called
a  \textit{subgroup perfect code} (\textit{subgroup total perfect code}) of $G$. A subgroup perfect code (subgroup total perfect code) $H$  of the group $G$ is said to be \textit{non-trivial} when $H \neq \{1\}$.  
Additionally, it is obvious that the neighborhood
of a vertex $g$ is $S-g$ if $2g \notin S$ and $(S \setminus \{2g\})-g$ if $2g \in S$. 
Therefore ${\rm CS}(G, S)$ is
a $|S|$-regular graph if and only if either $S$ is square-free in $G$ or $S$ consists of all squares
of $G$. 
In what follows, we summarize the main  points that we will cover in this paper.

In Section \ref{Prel}, we review some basic results, definitions and notations that will be used to prove our main theorems in the following sections. More importantly, we will see that there exists a one-to-one correspondence between total perfect codes of Cayley sum graphs and factorizations of groups that are defined in \cite{Vuza-1991}.  A \textit{factorization} of a group $G$ is a tuple $(A_1, \dots, A_d)$, where $A_i$s are subsets of $G$ and $d\geqslant 2$, such that each element $g\in G$ can be uniquely written as 
$$g=a_1+\dots +a_d,$$
 with $a_i\in A_i$ for $1\leqslant i\leqslant d$.

 In Section \ref{Cyclic}, our main results are focused on  finding sufficient and necessary conditions for a Cayley sum graph over a cyclic group to admit a total perfect code. 
 More precisely, it is proved in \cite[Lemma 2.5]{Feng-2017} 
 that a connected circulant graph $\mathrm{Cay}(\bz _n, S)$, where $|S|\mid n$, admits a  total perfect code if $s\not\equiv s^{\prime}~ (\mathnormal{mod}\ k)$, for distinct $s, s^{\prime}\in S$. We also prove a similar condition for Cayley sum graphs but this condition is not  always necessary. So, we will discuss some situations in which it is also necessary. This leads to several theorems which are summarized in Table \ref{survey_table}.
 Moreover, in Section \ref{Direct-Sec}, we consider the direct product of $d$ cyclic groups and  generalize some theorems of the previous section.
\section{Preliminaries}\label{Prel}
In this section, we review some results that will be  applied to prove our main results in the following sections.
Prior to this, we introduce some definitions and notations. 

Suppose that $G$ is a group and $X$ and $Y$ are subsets of $G$. We write $X+Y=\{x+y:~ x\in X, y\in Y\}$, $X-Y=\{x-y: ~x\in X, y\in Y\}$, and $G_X=\{g\in G:~ X+g=X\}$. 
Moreover,  we write $X/H=\{H+x: ~ x\in X\}$, where  $H$ is a subgroup of $G$. Each element of $G_X$
is called a \textit{period} of $X$ in $G$.
 Also, $G_X$ is called the \textit{subgroup of periods} or the \textit{stabilizer} of $X$ in $G$.
If $G_X\neq \{0\}$, $X$ is called \textit{periodic}, where $X\neq \emptyset$, and \textit{aperiodic} otherwise \cite{Vuza-1991}.
 Furthermore, when $G$ is factorized into two factors $X$ and $Y$ we denote it by $G=X\oplus Y$.
  If  at least one factor in every factorizations of an abelian group is periodic, then it is said to be a \textit{good abelian} group. It is well known that the cyclic groups whose order belong to the following set 
  \begin{equation}\label{GoodAbeli}
  N=\{p^{\lambda}, p^{\lambda}q, p^2q^2, pqr, p^2qr, pqrs:~   p, q, r, s ~\text{are distinct primes and}~\lambda \geqslant 1
 \}
\end{equation}
are good abelian groups, see \cite[Section 2]{Tijdeman-1995}.
 We use the notation $G_1\cong G_2$ to show that  two groups $G_1$  and $G_2$ are isomorphic. Also, when an integer $x$ divides or  does not divide the integer $y$, we denote them by $x\mid y $ and $x\nmid y$, respectively. Moreover, the notation $(x,y)=1$ means that  the integers $x$ and $y$ are coprime to each other. 
 Finally, but importantly, the set of all total perfect codes of ${\rm CS}(G, S)$ is denoted by $\mathcal{T} (G, S)$.
\begin{lemma}\label{transv-fac}
Let $G$ be a group, $S$ a normal subset of $G$, and $C$ be a subgroup of $G$. 
 Then, $C$ is a total perfect code of $\mathrm{CS}(G,S)$ of degree $|S|$ if and only if $G=C\oplus S$. 
\end{lemma}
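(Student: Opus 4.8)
The plan is to translate the defining property of a total perfect code into a counting statement about representations, and then recognize that statement as the unique-factorization condition $G=C\oplus S$. Fix a vertex $g\in G$. Its neighbors in $C$ are exactly those $c\in C$ with $g+c\in S$ and $c\neq g$, so the number of neighbors of $g$ lying in $C$ is
$$N_C(g)=\bigl|\{c\in C:\ g+c\in S\}\bigr|-\varepsilon(g),$$
where $\varepsilon(g)=1$ precisely when $g\in C$ and $2g\in S$ (the self-contribution $c=g$) and $\varepsilon(g)=0$ otherwise. Writing $f(g):=\bigl|\{c\in C:\ g+c\in S\}\bigr|$, the first reduction is to eliminate $\varepsilon(g)$: this is where the degree-$|S|$ hypothesis enters. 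Since $\mathrm{CS}(G,S)$ is $|S|$-regular, by the regularity criterion recalled above $S$ is either square-free or consists of all squares; in the relevant square-free case $2g\notin S$ for every $g$, so $\varepsilon(g)=0$ and hence $N_C(g)=f(g)$.

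The second, and main, step is to identify $f(g)$ with the number of factorization representations of $g$. Setting $s=g+c\in S$ gives $c=-g+s$, so $c\in C$ iff $s\in g+C$, whence $f(g)=|S\cap(g+C)|$. On the other hand, the number of ways to write $g=c'+s'$ with $c'\in C$, $s'\in S$ is $r(g):=\bigl|\{c'\in C:\ -c'+g\in S\}\bigr|$. I would then show $f(g)=r(g)$ for every $g$: after the substitution $c'\mapsto -c'$, a bijection of the subgroup $C$ since $-C=C$, one gets $r(g)=\bigl|\{c\in C:\ c+g\in S\}\bigr|$, and normality of $S$ gives $c+g=-g+(g+c)+g\in -g+S+g=S$ iff $g+c\in S$, so the two counts coincide. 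This is the part I expect to demand the most care, since it is exactly where normality of $S$ together with $-C=C$ must be used to reconcile the sum order $g+c$ appearing in the graph with the order $c'+s'$ appearing in the factorization.

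With these two steps the equivalence is immediate. By definition $C$ is a total perfect code iff $N_C(g)=1$ for every $g$, which by the first step is $f(g)=1$ for all $g$, hence by the second step $r(g)=1$ for all $g$; and $r(g)=1$ for every $g\in G$ says precisely that each element of $G$ has a unique representation $c+s$ with $c\in C$, $s\in S$, that is, $G=C\oplus S$. Reading the chain backwards yields the converse, so both implications follow at once. As a consistency check one may double-count the pairs $(v,c)$ with $v\sim c\in C$ to get $|C|\,|S|=|G|$, matching the cardinality forced by a factorization. I would close by remarking that the square-free reduction is essential: in the all-squares regular case the self-term $\varepsilon(g)$ persists and the stated equivalence can break down, so in this lemma ``of degree $|S|$'' is to be read as the square-free regular case.
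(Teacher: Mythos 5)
Your proof is correct and follows essentially the same route as the paper's: both translate the condition ``every $g$ has exactly one neighbour in $C$'' into ``every $g$ has a unique representation $g=c+s$'' using $-C=C$ and the normality of $S$. You are merely more explicit about two points the paper leaves implicit --- the exclusion of the self-term $c=g$ (your $\varepsilon(g)$) and the fact that the degree-$|S|$ hypothesis forces the square-free case --- which is a welcome tightening rather than a different argument.
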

\begin{proof}
Let $C$ be a total perfect code of $\mathrm{CS}(G,S)$. So, for any $x\in G$, there exists a unique $c\in C$ such that $x+c=s$, for some $s\in S$.
So, each $x\in G$ can be uniquely written as 
$x=(-c)+s$. This leads to $G=C\oplus S$.

Let $G=C\oplus S$, where $S$ is a normal subset of $G$, and $C$ is a subgroup of $G$. Then,  each $x\in G$ can be uniquely written as $x=c+s$ for $c\in C$ and $s\in S$. So, $C\in \Tau (G,S)$.
\end{proof}
\begin{remark}\label{trans-fac-total}
In Lemma \ref{transv-fac}, we can view $C$ as a subset of $G$. In this case, $C$ is a total perfect code in $\mathrm{CS}(G, S)$ if and only if $G = (-C) \oplus S$.
\end{remark}
\begin{lemma}\label{factor-conditions}\cite[Proposition 2.1]{Vuza-1991}
Suppose that $G$ is an abelian group, and  $X$ and $Y$ are non-empty subsets of $G$. Then $G = X \oplus Y$ if and only if any two of the following conditions hold:
\begin{itemize}
\item[(a)] $G=X+Y$,
\item[(b)]$(X-X)\cap (Y-Y)=\{0\}$,
\item[(c)] $|G|=|X||Y|$.
\end{itemize}
\end{lemma}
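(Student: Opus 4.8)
The plan is to recast the factorization $G = X \oplus Y$ as a statement about the \emph{sum map} $\sigma \colon X \times Y \to G$ defined by $\sigma(x,y) = x+y$. By definition, $G = X \oplus Y$ holds precisely when $\sigma$ is a bijection. The strategy is then to match each of the three listed conditions with a property of $\sigma$ and to invoke the pigeonhole principle to trade one property for another.

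First I would identify condition (a) with the surjectivity of $\sigma$: indeed $G = X+Y$ says exactly that every $g \in G$ lies in the image of $\sigma$. Next I would show that condition (b) is equivalent to the injectivity of $\sigma$. For this, suppose $\sigma(x_1,y_1) = \sigma(x_2,y_2)$, i.e. $x_1 + y_1 = x_2 + y_2$; rearranging in the abelian group gives $x_1 - x_2 = y_2 - y_1$, an element lying in both $X - X$ and $Y - Y$. Thus a collision of $\sigma$ produces a nonzero element of $(X-X)\cap(Y-Y)$, and conversely any nonzero such element yields two distinct preimages; hence $\sigma$ is injective if and only if $(X-X)\cap(Y-Y) = \{0\}$. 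Finally, condition (c) records that $|X \times Y| = |X|\,|Y| = |G|$, i.e. the domain and codomain of $\sigma$ have the same finite size.

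With these translations in hand, the forward implication is immediate: if $G = X \oplus Y$ then $\sigma$ is bijective, so it is simultaneously surjective (a), injective (b), and an equicardinal bijection (c); in particular any two of the conditions hold. For the reverse implication I would treat the three pairs separately. If (a) and (b) hold, then $\sigma$ is both surjective and injective, hence bijective, giving $G = X \oplus Y$ with no further input. If (a) and (c) hold, then $\sigma$ is a surjection between finite sets of equal cardinality, so by the pigeonhole principle it is also injective, hence bijective. Symmetrically, if (b) and (c) hold, then $\sigma$ is an injection between finite sets of equal cardinality, so it is surjective, hence bijective. In all three cases $\sigma$ is a bijection and $G = X \oplus Y$ follows.

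The genuinely delicate points are the equivalence of (b) with injectivity---where one must be careful that a collision is nontrivial exactly when the associated difference is nonzero, so that the zero element is correctly excluded---and the reliance on the finiteness of $G$ in the two pairs involving (c), where the pigeonhole principle is the crux: without $|G| < \infty$ a surjection (or injection) between equicardinal sets need not be bijective. Since the groups under consideration here are finite, this causes no difficulty, but it is the step on which the ``any two'' phrasing ultimately rests.
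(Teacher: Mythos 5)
Your proof is correct. The paper does not prove this lemma at all --- it is quoted verbatim as Proposition~2.1 of the cited work of Vuza --- so there is no internal argument to compare against; your translation of (a), (b), (c) into surjectivity, injectivity, and equicardinality of the sum map $\sigma\colon X\times Y\to G$, followed by the pigeonhole step, is the standard and complete way to establish it. The two points you flag as delicate are handled properly: a collision $(x_1,y_1)\neq(x_2,y_2)$ forces $x_1\neq x_2$ \emph{and} $y_1\neq y_2$, so the common difference is genuinely nonzero, and the finiteness of $G$ (which is implicit throughout the paper, where $G$ is always $\mathbb{Z}_n$ or a finite product of cyclic groups) is exactly what the pairs involving (c) require.
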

\begin{lemma}\label{factor-prime}\cite[Theorem 2.1]{Vuza-1991}
Suppose that $n\geqslant 2$ is an integer and $X$ and $Y$ are non-empty subsets of $\bz _n$ with $|X|=p^r$, where $p$ is a prime and $r\geqslant 0$. If $(X,Y)$ is a factorization of
 $ \mathbb{Z}_n$, then at least one of $X$ and $Y$ is periodic.
\end{lemma}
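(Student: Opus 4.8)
The plan is to recast factorizations as a divisibility statement about cyclotomic polynomials and then read off periodicity spectrally. To each subset $A\subseteq\bz_n$ I associate its \emph{mask polynomial} $A(t)=\sum_{a\in A}t^{a}$, regarded modulo $t^{n}-1$. Counting representations $g=x+y$ shows that $(X,Y)$ is a factorization of $\bz_n$ exactly when
$$X(t)\,Y(t)\equiv 1+t+\cdots+t^{n-1}=\frac{t^{n}-1}{t-1}\pmod{t^{n}-1}.$$
For every divisor $d\mid n$ with $d>1$ the cyclotomic polynomial $\Phi_d$ divides both $\frac{t^{n}-1}{t-1}$ and $t^{n}-1$, hence it divides $X(t)Y(t)$; since $\Phi_d$ is irreducible over $\bz$, I conclude that for each such $d$ either $\Phi_d\mid X(t)$ or $\Phi_d\mid Y(t)$. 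Writing $\zeta$ for a primitive $n$-th root of unity, this says that for every $k\not\equiv 0$ at least one of $X(\zeta^{k}),Y(\zeta^{k})$ vanishes.

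The second ingredient is a spectral criterion for periodicity. Fourier inversion gives $g\in G_A$ iff $A(\zeta^{k})(\zeta^{kg}-1)=0$ for all $k$, so $G_A$ is the annihilator of the subgroup generated by $N_A=\{k:A(\zeta^{k})\neq 0\}$. Thus $A$ is periodic iff $N_A$ fails to generate $\bz_n$, i.e. iff there is a prime $q\mid n$ with $A(\zeta^{k})=0$ whenever $q\nmid k$; equivalently $A$ is a union of cosets of the order-$q$ subgroup $\langle n/q\rangle$, which in polynomial terms means $\prod_{d\mid n,\ v_q(d)=v_q(n)}\Phi_d$ divides $A(t)$. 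So the target reduces to showing that, for some prime $q\mid n$, all cyclotomic factors at the top $q$-level fall into one and the same factor.

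It remains to use $|X|=p^{r}$. If $r=0$ then $X$ is a singleton and $Y=\bz_n$ is periodic, so I may assume $r\geq 1$ and $p\mid n$; write $n=p^{a}m$ with $p\nmid m$. Using the CRT splitting $\bz_n\cong\bz_{p^{a}}\times\bz_{m}$ and collecting fibre multiplicities, I would pass to weighted polynomials whose values at $p$-power roots of unity (respectively at $p'$-power roots) match those of $X(t)$; since $|X|=p^{r}$ is coprime to $m$, the $p'$-projection is severely constrained, and I would feed the resulting smaller factorization into an induction on $n$. I expect the main obstacle to be the \emph{mixed} cyclotomic factors $\Phi_d$ with $0<v_p(d)<a$, or with $v_p(d)=a$ but $d\neq p^{a}$: these are invisible to either single projection, so controlling how they split between $X$ and $Y$---and proving that the prime-power cardinality forces a whole top $q$-level into one factor---is the crux. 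I anticipate resolving it in the style of the R\'edei--de Bruijn theory, by descending to a proper quotient in which one factor has already become a union of cosets and applying the inductive hypothesis there.
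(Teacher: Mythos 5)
The paper offers no proof of this lemma at all: it is quoted as \cite[Theorem 2.1]{Vuza-1991}, so the only thing to judge your attempt against is whether it would actually establish the statement. Your setup is correct and is indeed the standard framework in which this theorem is proved: the mask-polynomial reformulation $X(t)Y(t)\equiv 1+t+\dots+t^{n-1}\pmod{t^n-1}$, the fact that each $\Phi_d$ with $1<d\mid n$ divides $X(t)$ or $Y(t)$ by irreducibility (plus Gauss's lemma to get divisibility in $\bz[t]$), and the spectral criterion that $A$ is periodic precisely when, for some prime $q\mid n$, every $\Phi_d$ with $v_q(d)=v_q(n)$ divides $A(t)$. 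The degenerate case $r=0$ is also handled correctly.

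However, the argument stops exactly where the theorem begins. The whole content of the statement is that $|X|=p^r$ forces, for some prime $q\mid n$, \emph{all} of the top-$q$-level factors $\Phi_d$ into one and the same of $X(t)$, $Y(t)$; your final paragraph names this as ``the crux'' and then records only an intention (``I would pass to\dots'', ``I anticipate resolving it\dots''). Two concrete obstacles are left untouched. First, the cardinality hypothesis, used through $X(1)Y(1)=n$ and $\Phi_{q^j}(1)=q$, only pins down how the prime-power cyclotomics $\Phi_{q^j}$ are distributed; it gives no a priori control over the composite-index (``mixed'') factors $\Phi_d$, and these are precisely the ones that decide periodicity, since the irreducibility dichotomy alone lets them fall on either side arbitrarily. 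Second, the proposed induction on $n$ via a quotient is circular as stated: the projection $\bz_n\to\bz_{n/q}$ carries a factorization to a factorization only when one of the factors is already a union of cosets of the kernel --- which is the periodicity being proved. Closing the gap requires a genuinely additional idea (in the de Bruijn--Sands--Vuza treatments, a replacement argument exchanging $X$ for a standard set with the same prime-power cyclotomic divisors and verifying that it still factors $\bz_n$ with $Y$). As it stands, this is a sound reduction and a research plan, not a proof.
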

\begin{lemma}\label{factor-GX}\cite[Lemma 2.5]{Yun-2017}
Suppose that $G$ is an abelian group and $X$ and $Y$ are two non-empty subsets of $G$. We have $G=X\oplus Y$ if and only if $G/G_X=\left(X/G_X\right)\oplus \left(Y/G_X\right)$ and $G_X\cap (Y-Y)=\{0\}$.
\end{lemma}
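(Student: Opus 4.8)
The plan is to reduce everything to a single structural fact and then to separate cleanly the roles of the two hypotheses. Write $H := G_X$ for brevity. The fact on which everything rests is that $X$ is a union of cosets of $H$: directly from the defining relation $X + H = X$ one gets $x + h \in X$ for every $x \in X$ and $h \in H$. This "period-lifting" device is what lets me pass freely between $G$ and the quotient $G/H$. It is also worth recording at the outset that condition (ii), $G_X \cap (Y-Y) = \{0\}$, says precisely that the reduction map $Y \to Y/H$ is injective, i.e. distinct elements of $Y$ fall into distinct $H$-cosets; this is what I will use to convert coset-level information about $Y$ into honest information.

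For the forward implication I would assume $G = X \oplus Y$. To obtain (ii), suppose $h = y_1 - y_2$ with $h \in H$ and $y_1, y_2 \in Y$; choosing any $x \in X$ I can write $x + y_1 = (x+h) + y_2$ with $x + h \in X$ by period-lifting, and uniqueness of the factorization forces $y_1 = y_2$, hence $h = 0$. For (i), coverage $G/H = (X/H) + (Y/H)$ is immediate by reducing a factorization $g = x + y$ modulo $H$. For uniqueness in the quotient, if $H + (x+y) = H + (x'+y')$ then $x + y = (x'+h) + y'$ for some $h \in H$; since $x' + h \in X$, the uniqueness of the factorization of $G$ yields $x = x' + h$ and $y = y'$, so $H + x = H + x'$ and $H + y = H + y'$, as needed.

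For the reverse implication I would assume (i) and (ii). Given $g \in G$, reduce modulo $H$ and apply (i) to write $H + g = (H + x) + (H + y)$, so that $g = x + y + h$ for some $h \in H$; absorbing $h$ into the first factor via period-lifting gives $g = (x+h) + y$, which is the required representation and settles existence. For uniqueness, suppose $g = x_1 + y_1 = x_2 + y_2$; reducing modulo $H$ and using the uniqueness part of (i) gives $H + y_1 = H + y_2$, whence $y_1 - y_2 \in H \cap (Y - Y) = \{0\}$ by (ii); thus $y_1 = y_2$ and then $x_1 = x_2$.

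The only real content is the period-lifting fact $x + h \in X$; the rest is bookkeeping. The one delicate point to keep straight is that a factorization of the quotient $G/H$ controls the factors only up to $H$-cosets, so on its own it pins down $y$ only modulo $H$. Condition (ii) is exactly what upgrades this to genuine uniqueness of $y$, while (i) supplies the coset-level factorization; I would structure the writeup so that these two roles --- "factorization at the coset level" and "collapse within a coset" --- stay visibly separated, and I do not anticipate any obstacle beyond that separation. An alternative route is to verify conditions (a)--(c) of Lemma \ref{factor-conditions}, using that $|X/H| = |X|/|H|$ and, by the injectivity noted above, $|Y/H| = |Y|$ under (ii); but the direct representation argument above is shorter and avoids the finiteness assumption hidden in the cardinality condition.
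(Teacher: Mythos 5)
Your proof is correct. Note that the paper does not prove this lemma at all --- it is imported verbatim from \cite[Lemma 2.5]{Yun-2017} --- so there is no in-paper argument to compare against; your direct verification (period-lifting $x+h\in X$ to pass between $G$ and $G/G_X$, with condition (ii) supplying injectivity of $Y\to Y/G_X$ to upgrade coset-level uniqueness to genuine uniqueness) is a sound, self-contained substitute and matches the standard proof of this fact.
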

\begin{lemma}\label{GX-aperiodic}\cite[Lemma 2.7]{Cameron-2025}
Let $G$ be an abelian group and 
$\emptyset \neq X\subseteq G$. Then $X/G_X$ is an aperiodic subset of $G/G_X$.
\end{lemma}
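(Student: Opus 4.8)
The plan is to prove directly that the stabilizer of $X/G_X$ in the quotient group $G/G_X$ is trivial. Write $H = G_X$ and let $\pi : G \to G/H$ denote the canonical projection, so that in the notation of the paper $X/H = \pi(X)$. The first and most important step is to observe that $X$ is a union of cosets of $H$. Indeed, by the definition of $G_X$ we have $X + h = X$ for every $h \in H$; hence for each $x \in X$ the entire coset $x + H$ lies in $X$, and consequently $X = \pi^{-1}(\pi(X))$.

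Next, I would take an arbitrary period $\bar g = H + g$ of $X/H$ in $G/H$, with the goal of showing $\bar g = H$, i.e. $g \in H$. The hypothesis $(X/H) + \bar g = X/H$ says precisely that $\pi(X + g) = \pi(X)$ as subsets of $G/H$. Since $X$ is a union of $H$-cosets, its translate $X + g$ is again a union of $H$-cosets, so both $X$ and $X+g$ coincide with the preimages of their own images under $\pi$.

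Applying $\pi^{-1}$ to the equality $\pi(X+g) = \pi(X)$ then yields $X + g = X$, which means exactly that $g \in G_X = H$, so $\bar g$ is the identity coset of $G/H$. As $\bar g$ was an arbitrary period of $X/H$, the stabilizer $(G/H)_{X/H}$ is trivial, and therefore $X/H$ is an aperiodic subset of $G/H$, as claimed.

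The argument is short, and its only delicate point is the coset-union observation. Without it one cannot pass from the equality of images $\pi(X+g) = \pi(X)$ back to an equality in $G$, since for a general subset the preimage of its image is strictly larger than the subset itself. The main thing to get right is thus the identity $X = \pi^{-1}(\pi(X))$, which is exactly where the defining property of $G_X$ enters; everything else is a formal consequence of the injectivity of $\pi^{-1}$ on unions of $H$-cosets.
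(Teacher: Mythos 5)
Your proof is correct and is exactly the standard argument for this fact; the paper itself does not prove the lemma but only cites \cite{Cameron-2025} for it. The one delicate point you identify --- that $X$ (and hence $X+g$) is a union of $G_X$-cosets, so that $\pi^{-1}(\pi(X))=X$ and the equality of images can be pulled back to $X+g=X$ --- is handled correctly, and the conclusion $g\in G_X$ follows as you say.
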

\begin{lemma}\cite[Lemma 2.6]{Cameron-2025}\label{|H|=|X|}
Suppose that $n\geqslant 2$ is an integer and $X\subseteq \bz_n$, where $|X|\geqslant 2$. Let $H$ be the subgroup of periods of $X$ in $\bz _n$. If $|X|=|H|$ and $\left(|X|, n/|X|\right)=1$, then
$x\not\equiv x^{\prime}~ (\mathnormal{mod}\ |X|)$, for distinct $x, x^{\prime}\in X$.
\end{lemma}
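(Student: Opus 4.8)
The plan is to exploit the fact that the hypothesis $|X|=|H|$ pins $X$ down completely up to translation. Write $m=|X|=|H|$. Since $H=G_X$ is a subgroup of the cyclic group $\bz_n$ of order $m$, Lagrange's theorem gives $m\mid n$, and because a cyclic group has a unique subgroup of each order dividing its own, $H$ must be the subgroup $\langle n/m\rangle=\{0,\,n/m,\,2n/m,\dots,(m-1)n/m\}$.

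Next I would show that $X$ is a single coset of $H$. By the definition of the stabilizer, $X+h=X$ for every $h\in H$, so for any fixed $x\in X$ we have $x+H\subseteq X$; hence $X$ is a union of $H$-cosets. These cosets are disjoint and each has cardinality $|H|=m=|X|$, so there is exactly one of them, i.e. $X=a+H$ for some $a\in\bz_n$.

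The remaining task is to see that reduction modulo $m$ separates the elements of $X$. Since $m\mid n$, reduction modulo $m$ is a well-defined group homomorphism $\pi:\bz_n\to\bz_m$, and because $X=a+H$ is a translate of $H$, the map $\pi$ is injective on $X$ if and only if it is injective on $H$. Here the coprimality hypothesis enters: $\pi(H)=\langle\pi(n/m)\rangle$ is the subgroup of $\bz_m$ generated by the residue of $n/m$, and $(|X|,\,n/|X|)=(m,\,n/m)=1$ forces this residue to generate all of $\bz_m$. Therefore $|\pi(H)|=m=|H|$, so $\pi|_H$ is injective, and hence so is $\pi|_X$; that is, distinct elements of $X$ are incongruent modulo $m=|X|$, which is exactly the claim.

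The argument is essentially routine once the structural observation is in place; the one step that carries the real content is recognizing that $|X|=|H|$ collapses $X$ into a single coset of $H$. After that, injectivity of the projection $\pi$ reduces to the assertion that $n/m$ is a unit modulo $m$, which is precisely the coprimality hypothesis $(m,\,n/m)=1$, so no further work is needed.
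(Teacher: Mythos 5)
Your proof is correct and complete: the identification of $H$ as the unique subgroup of order $m=|X|$ in $\bz_n$, the observation that $|X|=|H|$ forces $X$ to be a single coset $a+H$, and the injectivity of reduction modulo $m$ on $H$ (equivalently, that $n/m$ has order $m$ in $\bz_m$ precisely because $(m,n/m)=1$) together yield the claim. Note that the paper itself does not prove this lemma but merely cites it from \cite{Cameron-2025}, so there is no in-paper argument to compare against; your self-contained proof is a valid substitute and uses exactly the structural facts one would expect.
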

Note that the following lemma is derived from \cite[Theorem 2]{Amooshahi-2016}. Although the definition of the Cayley sum graph in \cite{Amooshahi-2016} differs slightly from the one used in the present context, the results remain applicable here, as our Cayley sum graphs are obtained by simply removing all loops from those considered in that work.

\begin{lemma}\cite[Theorem 2]{Amooshahi-2016}\label{Connectivity-CS}
Let $S$ be a normal subset of a group $G$. Then the Cayley sum graph $\rm {CS}(G, S)$ is connected if and only if $G=\left\langle S\right\rangle$ and $|G : \left\langle S-S\right\rangle|\leqslant 2$.
\end{lemma}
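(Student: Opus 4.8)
The plan is to analyze the connected components of $\mathrm{CS}(G,S)$ directly, via walks, isolating the normal subgroup $T:=\langle S-S\rangle$ as the controlling object. First I would record two preliminaries. Loops are irrelevant to connectivity (a loop joins a vertex only to itself), so I may freely use the full neighbourhood description from the excerpt: every vertex $g$ is joined to each $h$ with $g+h\in S$, i.e. to each $h\in -g+S$. Second, since $S$ is normal, $S-S$ is a normal subset (for any $g$, $-g+(s-s')+g=(-g+s+g)-(-g+s'+g)\in S-S$), so $T=\langle S-S\rangle$ is a normal subgroup of $G$; the same computation shows $-S+S$ is normal and that $\langle -S+S\rangle=\langle S-S\rangle=T$.

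Next I would track a walk $g_0,g_1,g_2,\dots$, where $g_{i+1}=-g_i+s_{i+1}$ for some $s_{i+1}\in S$. A short computation using normality gives $-g_0+g_2=-s_1'+s_2$ with $s_1'=-g_0+s_1+g_0\in S$, so $g_2\in g_0+T$; inductively every even-indexed vertex lies in the coset $g_0+T$, and every odd-indexed vertex lies in the single coset $-g_0+s_0+T$ (fixed $s_0\in S$), since any two elements $-g_0+s,\,-g_0+s'$ differ by $-s+s'\in T$. Hence the component of $g_0$ is contained in the union $(g_0+T)\cup(-g_0+s_0+T)$ of at most two cosets of $T$. This already yields the forward implication: if $\mathrm{CS}(G,S)$ is connected then $G$ is covered by these two cosets, forcing $[G:T]\le 2$; moreover the component of $0$ is contained in $\langle S\rangle$ (starting from $0$, every $g_i$ lies in $\langle S\rangle$), so connectivity forces $\langle S\rangle=G$.

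For the converse I would strengthen the containment to an exact description of reachability: the set of vertices reachable from $g_0$ in an even number of steps is exactly $g_0+T$. The key step is that a two-step walk realises precisely the moves $g_0\mapsto g_0+u$ with $u\in -S+S$ (as $s_1,s_2$ range over $S$, the element $u=-s_1'+s_2$ ranges over all of $-S+S$); iterating, the even-reachable set is $g_0+\langle -S+S\rangle=g_0+T$, and the odd-reachable set is the full coset $-g_0+s_0+T$. Assuming $\langle S\rangle=G$ and $[G:T]\le 2$, I would then finish by cases. If $T=G$ the coset $g_0+T$ is all of $G$ and we are done. If $[G:T]=2$, then $s_0\notin T$ (otherwise $S\subseteq T$ and $\langle S\rangle\subseteq T\neq G$), while $2g_0\in T$ since $G/T$ has order two; hence $g_0+T$ and $-g_0+s_0+T$ are the two distinct cosets, whose union is $G$, so again the component of $g_0$ is everything.

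The main obstacle is this exact reachability claim in the converse, i.e. upgrading \emph{the component sits inside two cosets} to \emph{the component fills those cosets}. This is where the non-commutative bookkeeping must be handled carefully, and it is precisely normality of $S$ that lets the conjugated elements $-g_0+s_1+g_0$ be reabsorbed into $S$, so that the two-step moves sweep out all of $-S+S$ and hence generate $T$; verifying $\langle -S+S\rangle=\langle S-S\rangle$ is the small algebraic fact that makes the even-reachable coset agree with the $T$ of the statement. I would also dispose of the degenerate cases separately: $|G|=1$ is trivial, and if $S=\emptyset$ the graph is edgeless, so it is connected iff $|G|=1$, which matches $\langle S\rangle=G$ on the right-hand side.
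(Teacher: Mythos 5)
Your argument is correct, but it cannot be compared against a proof in the paper because the paper offers none: Lemma \ref{Connectivity-CS} is imported verbatim from \cite[Theorem 2]{Amooshahi-2016}, with only a remark that the result survives the deletion of loops. What you have supplied is a self-contained replacement for that citation, and the walk analysis is the right mechanism. The essential points all check out: normality of $S$ makes $S-S$ (and $-S+S$, which in fact equals $S-S$ as a set, since $-s+s'=-s+(s'-s)+s$ is a conjugate of an element of $S-S$) a normal subset, so $T=\langle S-S\rangle$ is a normal subgroup; a two-step move from base point $g$ realises exactly $g\mapsto g+u$ with $u=-(-g+s_1+g)+s_2$ sweeping out all of $-S+S$ as $s_1,s_2$ range over $S$; and the component of $g_0$ is therefore exactly $(g_0+T)\cup(-g_0+s_0+T)$, which gives both directions. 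Two places deserve one more sentence each in a final write-up. First, your parenthetical for the odd-indexed vertices only covers distance one; the general case should be phrased as ``one step followed by an even walk,'' so that $g_{2k+1}\in g_1+T=(-g_0+s_0)+T$. Second, in the converse the exact reachability is computed in the graph \emph{with} loops (a step $g\mapsto -g+s$ with $s=2g$ is not an edge of $\mathrm{CS}(G,S)$), so you should state explicitly that excising loops from a walk between two distinct vertices yields a walk in the loop-free graph, hence the two graphs have the same components; you flag this at the outset but it is doing real work precisely in the surjectivity half. With those two sentences added, the proof is complete and, unlike the paper, does not outsource the statement.
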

\section{Cayley Sum Graphs Over Cyclic Groups }\label{Cyclic}
In \cite{Feng-2017}, a sufficient condition has been proved for a Cayley graph to admit a total perfect code but this condition is not necessary.  In \cite{Cameron-2025}, they detect some situations in which this condition is also necessary. We  now aim to discuss some sufficient 
and necessary conditions for Cayley sum graphs. The following lemma is the counterpart of \cite[Lemma 2.5]{Feng-2017} for Cayley sum graphs.

\begin{table}[ht]
\begin{center}
\begin{tabular}{|c|c|c|c|c|c|}
\hline
$n$ & $S$ & $C$ &$(|S|,|C|)=1$ & \makecell{\text{All total}\\ \text{ perfect codes}\\ \text{are recognized?}} & \text{Reference} \\
\hline
&  & $\leqslant \bz_n$ &\text{Not Required}  & \text{Yes} & Theorem \ref{k|n-distinct-Th} \\
\hline
$=p|S|$ &\makecell{ $|S|=p$\\ \text{Square-free} }&  &\text{Not Required } & \text{Yes} & \text{Theorem \ref{S/H}} \\
\hline
 & $|S|=p>2$ &  &  \text{Not Required}& \text{No} & \text{Theorem \ref{p-circulant}}  \\
\hline
 & $|S|=p^l$ &  & \text{Required} & No & \text{Theorem \ref{pl-circulant}} \\
\hline
 &  \makecell{$|S|=pq$\\ \text{Square-free}\\ \text{Periodic}}&  & \text{Required} &\text{No}  &\text{Theorem \ref{Periodic-pq}}  \\
\hline
 $\in \{pqr, pqrs\}$& \makecell{$|S|=pq$\\ \text{Square-free}\\ \text{Aperiodic}} &  & \text{Required} &\text{No}  &\text{Lemma \ref{Lem-pq}}   \\
\hline
$\in N$ &\text{Square-free}  &  & \text{Required} &  \text{No}& \text{Theorem \ref{n-in-N}}  \\
\hline
\end{tabular}
\end{center}
\caption{Under which conditions  does $CS(\bz _n, S)$, with $|S|\mid n$, admits a total perfect $C$  if and only if $s\not\equiv s^{\prime}~ (\mathnormal{mod}\ |S|)$, for distinct $s, s^{\prime}\in S$? }
\label{survey_table}
\end{table}
\begin{lemma}\label{k|n-distinct}
Suppose that $\Gamma:=\mathrm{CS}(\mathbb{Z}_n,S)$ is  connected of degree $k=|S|$ and order $n\geqslant 4$. If
 $k\mid n $ and $s\not\equiv s^{\prime}~ (\mathnormal{mod}\ k)$, for distinct $s, s^{\prime}\in S$, then $\Gamma$ admits a  total perfect code. Furthermore, the total perfect code is of the form $k\bz_n$ which is a subgroup of $\bz_n$.
\end{lemma}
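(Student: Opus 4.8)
The plan is to exhibit the subgroup $C = k\bz_n = \{0, k, 2k, \dots, (n/k-1)k\}$ explicitly as the claimed total perfect code and to verify the factorization criterion of Lemma \ref{transv-fac}. Since $k \mid n$, the set $C$ is a subgroup of $\bz_n$ of order $n/k$, so by Lemma \ref{transv-fac} it suffices to prove $\bz_n = C \oplus S$. To establish this I would invoke Lemma \ref{factor-conditions} and check only conditions (b) and (c); the equality of orders $|C||S| = (n/k)\cdot k = n = |\bz_n|$ gives (c) immediately.

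The heart of the argument is condition (b), namely $(C-C) \cap (S-S) = \{0\}$. Because $C$ is a subgroup, $C - C = C = k\bz_n$ is exactly the set of multiples of $k$ in $\bz_n$. A nonzero element of $S-S$ has the form $s - s'$ with $s, s' \in S$ distinct, and since $k \mid n$ the reduction map $\bz_n \to \bz_k$ is a well-defined homomorphism with kernel $k\bz_n$; thus $s - s' \in k\bz_n$ would force $s \equiv s'~(\mathnormal{mod}\ k)$, contradicting the hypothesis that distinct elements of $S$ are incongruent modulo $k$. Hence the only multiple of $k$ lying in $S-S$ is $0$, which is precisely condition (b). (Equivalently, the incongruence condition together with $|S| = k$ says that $S$ is a complete transversal of $k\bz_n$ in $\bz_n$.) Lemma \ref{factor-conditions} then yields $\bz_n = C \oplus S$, and Lemma \ref{transv-fac} converts this into the statement that $C = k\bz_n$ is a total perfect code of degree $|S| = k$.

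I expect the only delicate point to be the interpretation of congruence modulo $k$ for elements of $\bz_n$: this is exactly where the divisibility $k \mid n$ is used, guaranteeing that ``$s \equiv s'~(\mathnormal{mod}\ k)$'' is meaningful for residues in $\bz_n$ and that $k\bz_n$ is precisely the kernel of reduction mod $k$. Once this is pinned down, the hypothesis $s \not\equiv s'~(\mathnormal{mod}\ k)$ translates directly into condition (b), and the remainder is the routine bookkeeping of the factorization lemmas. It is worth remarking that the connectivity hypothesis and the bound $n \geqslant 4$ do not appear to enter this implication; the factorization is produced purely from $k \mid n$ and the incongruence condition, which is consistent with the claim furnishing only a sufficient (and not necessary) condition, as foreshadowed by Table \ref{survey_table}.
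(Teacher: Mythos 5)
Your proof is correct and rests on the same key observation as the paper's: the incongruence condition together with $|S|=k$ makes $S$ a complete transversal of $k\bz_n$, so that $\bz_n = k\bz_n \oplus S$. The paper verifies this decomposition directly in one line, while you package it through Lemma \ref{factor-conditions} (conditions (b) and (c)) and Lemma \ref{transv-fac}; this is a cosmetic difference only, and your closing remarks about where $k\mid n$ enters and connectivity not being needed are accurate.
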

\begin{proof}
Since the elements of $S$ are pairwise distinct modulo $k$, each element 
 $x\in\mathbb{Z}_n$ can be uniquely written as $x\equiv s~ (\mathnormal{mod}\ k)$, 
for some $s\in S$. So, $\{ki:~ 0\leqslant i< n/k\}=k\bz_n$ is a total perfect code of $\mathrm{CS}(\mathbb{Z}_n,S)$. 
\end{proof}
\begin{theorem}\label{k|n-distinct-Th}
A connected Cayley sum graph $\mathrm{CS}(\mathbb{Z} _n,S)$ of degree $k=|S|$ and order $n\geqslant 4$, where $k\mid n $,  admits a subgroup total perfect code if and only if $s\not\equiv s^{\prime}~ (\mathnormal{mod}\ k)$, for distinct $s,s^{\prime}\in S$.
\end{theorem}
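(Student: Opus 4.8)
The plan is to route both implications through the factorization criterion of Lemma~\ref{transv-fac}, which converts the statement ``$C$ is a subgroup total perfect code of $\mathrm{CS}(\mathbb{Z}_n, S)$ of degree $k$'' into ``$\mathbb{Z}_n = C \oplus S$''. The sufficiency direction is then immediate: under the hypotheses $k \mid n$ and the pairwise distinctness of the elements of $S$ modulo $k$, Lemma~\ref{k|n-distinct} already exhibits $k\mathbb{Z}_n$ as a total perfect code, and since $k\mathbb{Z}_n$ is a subgroup of $\mathbb{Z}_n$, this is precisely a subgroup total perfect code. So no further work is needed for this half.

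For necessity, I would begin with a subgroup total perfect code $C$ and apply Lemma~\ref{transv-fac} to obtain $\mathbb{Z}_n = C \oplus S$. By Lemma~\ref{factor-conditions}(c) this forces $|C| = n/k$. The key step is to pin down $C$ exactly: since $\mathbb{Z}_n$ is cyclic it possesses a unique subgroup of each order dividing $n$, and the unique subgroup of order $n/k$ is $k\mathbb{Z}_n = \{x \in \mathbb{Z}_n : k \mid x\}$. Hence $C = k\mathbb{Z}_n$, the set of multiples of $k$.

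To conclude, I would apply Lemma~\ref{factor-conditions}(b), which yields $(C - C) \cap (S - S) = \{0\}$; as $C$ is a subgroup we have $C - C = C$, so $C \cap (S - S) = \{0\}$. If some distinct $s, s' \in S$ satisfied $s \equiv s' \pmod{k}$, then $s - s'$ would be a nonzero multiple of $k$, placing it in $C \cap (S - S)$ and contradicting the triviality of this intersection. Therefore the elements of $S$ are pairwise distinct modulo $k$, completing the necessity direction. The only genuine obstacle is the identification $C = k\mathbb{Z}_n$, and this is exactly where the ``subgroup'' hypothesis does its work: dropping it removes the rigidity supplied by the uniqueness of subgroups in a cyclic group, which is consistent with the paper's observation that the distinctness condition need not be necessary for general total perfect codes.
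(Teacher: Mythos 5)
Your proposal is correct and follows essentially the same route as the paper's own proof: sufficiency via Lemma~\ref{k|n-distinct}, and necessity by combining Lemma~\ref{transv-fac} with Lemma~\ref{factor-conditions} to force $|C|=n/k$, identifying $C=k\mathbb{Z}_n$ as the unique subgroup of that order, and reading the congruence condition off $(C-C)\cap(S-S)=\{0\}$.
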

\begin{proof}
The sufficiency is proved by Lemma \ref{k|n-distinct}. 
For the necessity, suppose that $n$ and $S$ are as in the theorem and  $\mathrm{CS}(\mathbb{Z}_n,S)$ admits a subgroup total perfect code $C$. By Lemma \ref{transv-fac}, we have  $\mathbb{Z}_n = S \oplus C$. Hence, from Lemma \ref{factor-conditions},  $(S -S)\cap(C -C) = \{0\}$ and $|C| = n/ k$.  Therefore, $C=k\mathbb{Z}_n$, because it is the only subgroup of $\mathbb{Z}_n$ with order $n/k$.  We have $C-C = k\mathbb{Z}_n$, and so, from the fact that $(S -S)\cap(C -C) = \{0\}$, we must have $s\not\equiv s^{\prime}~ (\mathnormal{mod}\ k)$, for distinct $s,s^{\prime}\in S$.
\end{proof}
\begin{lemma}\label{IsomorphicGroups}
Let $G_1$ and $G_2$ be finite groups such that $G_1 \cong G_2$ via the group isomorphism $\varphi: G_1\rightarrow G_2$. Then for a subset  $C \subseteq G_1$ we have 
$$ C\in \Tau (G_1,S) \Leftrightarrow \varphi (C) \in \Tau\left(G_2,\varphi (S)\right),$$
where $S$ is a normal subset of $G_1$.
\end{lemma}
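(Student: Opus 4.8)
The plan is to realize $\varphi$ as a graph isomorphism between the two Cayley sum graphs and then invoke the fact that being a total perfect code is a purely graph-theoretic property, hence preserved by graph isomorphisms. First I would check that $\varphi(S)$ is a normal subset of $G_2$, so that the target graph $\mathrm{CS}(G_2, \varphi(S))$ is even well defined: for any $g' \in G_2$ write $g' = \varphi(g)$ and compute $-g' + \varphi(S) + g' = \varphi(-g) + \varphi(S) + \varphi(g) = \varphi(-g + S + g) = \varphi(S)$, using that $\varphi$ is a homomorphism and that $S$ is normal in $G_1$.

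Next I would verify that $\varphi$ preserves adjacency in both directions. For $g, h \in G_1$, since $\varphi$ is a homomorphism we have $\varphi(g) + \varphi(h) = \varphi(g+h)$, and since $\varphi$ is a bijection, $g + h \in S$ if and only if $\varphi(g+h) \in \varphi(S)$, i.e.\ $\varphi(g) + \varphi(h) \in \varphi(S)$; moreover $g \neq h$ if and only if $\varphi(g) \neq \varphi(h)$ by injectivity. Hence $g$ and $h$ are adjacent in $\mathrm{CS}(G_1, S)$ exactly when $\varphi(g)$ and $\varphi(h)$ are adjacent in $\mathrm{CS}(G_2, \varphi(S))$, so $\varphi$ is an isomorphism of graphs. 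In particular $\varphi$ carries the neighbourhood of each vertex $x$ onto the neighbourhood of $\varphi(x)$.

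To finish, I would translate the total-perfect-code condition through this isomorphism. Fix $C \subseteq G_1$. Because $\varphi$ is injective and maps $N_{\mathrm{CS}(G_1,S)}(x)$ onto $N_{\mathrm{CS}(G_2,\varphi(S))}(\varphi(x))$, it restricts to a bijection between $N(x) \cap C$ and $N(\varphi(x)) \cap \varphi(C)$, so these two sets have equal cardinality. As $x$ ranges over $G_1$, $\varphi(x)$ ranges over all of $G_2$; therefore every vertex of $\mathrm{CS}(G_1, S)$ has exactly one neighbour in $C$ if and only if every vertex of $\mathrm{CS}(G_2, \varphi(S))$ has exactly one neighbour in $\varphi(C)$. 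This is precisely the claimed equivalence $C \in \Tau(G_1, S) \Leftrightarrow \varphi(C) \in \Tau(G_2, \varphi(S))$.

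I do not expect a genuine obstacle here, since the statement amounts to a transport of structure along $\varphi$. The only points requiring attention are confirming that normality of the connection set is preserved (handled in the first step) and that the loop-excluding condition $g \neq h$ survives the transfer, which it does by injectivity of $\varphi$; both are routine once the homomorphism and bijection properties are used carefully.
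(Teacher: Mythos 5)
Your proposal is correct and follows essentially the same route as the paper: check that $\varphi(S)$ is normal in $G_2$ and then transport the unique-neighbour condition along $\varphi$, the only difference being that you package this as an explicit graph isomorphism while the paper verifies the defining condition element-wise. If anything, your version is slightly more careful, since you explicitly note that the loop-excluding condition $g\neq h$ is preserved by injectivity of $\varphi$, a point the paper's proof passes over silently.
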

\begin{proof}
Suppose that $ C\in \Tau (G_1,X)$. For any $g_2\in G_2$, we have $g_2=\varphi(g_1)$, for some $g_1\in G_1$. On the other hand, there exists a unique element $c\in C$ such that $g_1+c=s$, for some $s\in S$. So, $g_2+\varphi(c)=\varphi(s)$, and hence, $\varphi (C) \in \Tau\left(G_2,\varphi (S)\right)$. Note that $\varphi(S)$ is a normal subset of $G_2$ because for any $\varphi(s)\in \varphi (S)$ and $g_2\in G_2$, we have $-g_2+\varphi(s)+g_2=\varphi(-g_1)+\varphi(s)+\varphi(g_1)=\varphi(-g_1+s+g_1)
\in \varphi(S)$, for some $g_1\in G_1$.

The sufficiency can be proved by a similar argument.
\end{proof}
\begin{theorem}\label{S/H}
Let $\Gamma:=\mathrm{CS}(\mathbb{Z}_n,S)$ be connected, where $n\geqslant 4$ and $S$ is a square-free subset of $\mathbb{Z}_n$, with $n=p|S|$ for some prime $p$. Suppose that $H$ is the subgroup of periods of $S$ in $\bz_n$ under addition. Then $\Gamma$ has a total perfect code if and only if $p=2$ and $s\not\equiv s^{\prime}~\left(\mathnormal{mod}\ |S|/|H| \right)$, for distinct $H+s, H+s^{\prime}\in S/H$. In this case, the total  perfect codes of $\Gamma$ have the general form 
$$\{h_1+i, h_2+i+k\},$$ where $h_1,h_2\in H$ and $0\leqslant i\leqslant k$.
\end{theorem}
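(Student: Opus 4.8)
The plan is to translate the total-perfect-code condition into a group factorization and then exploit square-freeness through a parity count. First I would invoke Lemma~\ref{transv-fac} together with Remark~\ref{trans-fac-total}: a subset $C$ lies in $\Tau(\bz_n,S)$ if and only if $\bz_n=(-C)\oplus S$. Comparing cardinalities forces $|C|=n/|S|=p$, so a total perfect code is nothing but a $p$-element set $-C$ whose translates $\{S+c:\,c\in -C\}$ partition $\bz_n$.

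Next I would extract $p=2$ from square-freeness. Since the squares of $\bz_n$ form the subgroup $2\bz_n$, a nonempty square-free $S$ forces $n$ to be even (otherwise $2\bz_n=\bz_n$) and $S\subseteq \bz_n\setminus 2\bz_n$, i.e. $S$ consists of odd residues only. Consequently each translate $S+c$ lies in a single coset of $2\bz_n$ -- the odd residues if $c$ is even, the even residues if $c$ is odd. As the $p$ translates partition $\bz_n$ into its two parity classes, each of size $n/2$, every parity class is a disjoint union of translates, whence $n/2=\bigl(\#\{c\in -C\text{ of a fixed parity}\}\bigr)\cdot|S|$. Using $|S|=n/p$ this gives exactly $p/2$ translates in each class, so $p$ is even and therefore $p=2$; moreover $-C$ contains precisely one even and one odd element.

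For the congruence condition I would pass to the quotient $\bz_n/H$, where $H$ is the given subgroup of periods of $S$. By Lemma~\ref{factor-GX}, $\bz_n=(-C)\oplus S$ is equivalent to $\bz_n/H=(S/H)\oplus\bigl((-C)/H\bigr)$ together with $H\cap(C-C)=\{0\}$; writing $m=n/|H|$ so that $\bz_n/H\cong\bz_m$, a cardinality count gives $|(-C)/H|=n/|S|=p$. Here $S/H$ is aperiodic by Lemma~\ref{GX-aperiodic}, so since $p$ is prime Lemma~\ref{factor-prime} forces $(-C)/H$ to be periodic; its group of periods has prime order $p$, making $(-C)/H$ a single coset of the unique order-$p$ subgroup $K=\langle m/p\rangle$. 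Translating a factor yields $\bz_m=(S/H)\oplus K$, which says precisely that $S/H$ is a transversal of $K$, i.e. $s\not\equiv s'\!\pmod{|S|/|H|}$ for distinct $H+s,H+s'\in S/H$ (as $|S|/|H|=m/p$ and $K=(m/p)\bz_m$). Note that once $p=2$ is known, square-freeness forces $S$ to be all odd residues and $H=2\bz_n$, so $S/H$ is a single coset and this congruence holds vacuously.

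Finally, for sufficiency and the explicit description I would reverse the construction: when $p=2$, choosing $-C=\{d_1,d_2\}$ with $d_1,d_2$ of opposite parity (for instance $\{0,1\}$) makes $S+d_1$ and $S+d_2$ the two parity classes, so $\bz_n=(-C)\oplus S$ by Lemma~\ref{factor-conditions}(b),(c) and $C$ is a total perfect code. Since the second paragraph shows that every total perfect code has $-C$ consisting of one even and one odd element, rewriting these two elements through $H=2\bz_n$ produces the stated general form. I expect the main obstacle to be the step forcing $p=2$: the key is to find the right parity/counting argument that genuinely uses square-freeness, after which the remaining implications are routine applications of the cited factorization lemmas.
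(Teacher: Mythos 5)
Your proof is correct, and while it leans on the same factorization machinery (Lemmas \ref{transv-fac}, \ref{factor-conditions}, \ref{factor-GX}, \ref{GX-aperiodic}, \ref{factor-prime}), it reaches the two key conclusions by genuinely different routes than the paper. For $p=2$, the paper deduces $|C|$ is even from the fact that a total perfect code induces a perfect matching on itself and then uses $|C|=n/|S|=p$; you instead observe that square-freeness places $S$ inside the odd coset of $2\bz_n$, so the translates $S+c$, $c\in -C$, each sit in a single parity class, and counting forces $p/2$ translates per class -- a cleaner use of the square-free hypothesis. For the congruence, the paper splits into the cases $S$ aperiodic and $S$ periodic and bootstraps the second from the first via an explicit isomorphism $\bz_n/H\cong\bz_{2k}$ (Lemma \ref{IsomorphicGroups}); you run a single uniform quotient argument: $S/H$ is aperiodic, so $(-C)/H$ is periodic of prime size, hence a coset of the order-$p$ subgroup, and translating makes $S/H$ a transversal. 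Your additional observation -- that once $p=2$ the constraints $|S|=n/2$ and $S\cap 2\bz_n=\emptyset$ force $S$ to be the entire odd coset, so $\Gamma\cong K_{n/2,n/2}$, $H=2\bz_n$, $|S|/|H|=1$, and the congruence condition is vacuous -- is a genuine clarification absent from the paper: it shows the theorem really says a total perfect code exists iff $p=2$, and it makes both the sufficiency direction and the description of the codes as opposite-parity pairs immediate. The paper's case analysis buys a template that is reused in later results (e.g.\ Theorem \ref{Periodic-pq} and Lemma \ref{Lem-pq} quote its Case 2 reduction), whereas your argument buys brevity and a sharper structural picture of this particular situation.
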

\begin{proof}
We consider the following two cases separately.

\textsf{Case 1.} $S$ is aperiodic. So, $H=\{0\}$ and the sufficiency is proved by Lemma \ref{k|n-distinct}.

To prove the necessity, let $C\in \Tau(\mathbb{Z}_n, S)$. Then $|C|$ is even, and from Remark \ref{trans-fac-total}, $\bz _n=(-C)\oplus S$. By Lemma \ref{factor-conditions}, we have $|-C|=|C|=n/|S|=p$, and hence, $p=2$. From Lemma \ref{factor-prime}, $(-C)$ must be periodic.
Suppose that $K$ is the subgroup of periods of $-C$ in $\bz_n$ under addition.
 We know that $|K|\mid |C|$. 
 Since $|K|>1$,  we have $|K|=2$, and hence, $K=(n/2)\mathbb{Z}_n$. Since $|K|=|-C|$, $-C$ is a coset of $K$ in $\bz_n$. Therefore, $-C$ and $C$ are of the form $(n/2)\mathbb{Z}_n+i$, for some 
$i\in \mathbb{Z}_n $.  In particular, $(n/2)\mathbb{Z}_n\in \Tau(\bz_n,S)$. So, Lemma \ref{k|n-distinct} implies that  $s\not\equiv s^{\prime}~ (\mathnormal{mod}\ |S|)$, for distinct $s, s^{\prime}\in S$. 

\textsf{Case 2.} $S$ is periodic. So, $|H|>1$. Set $k=|S|/|H|$. So, we have $|\mathbb{Z}_n|/|H|=n/|H|=kp$, and hence, $H=(kp)\mathbb{Z}_n$. We claim that  $H\notin S/H$, or equivalently, $S\cap H=\emptyset$.  Suppose to the contrary and let $s\in S\cap H$.
Since $H$ is the stabilizer of $S$, $S+s=S$. Thus, $2s=s+s\in S$ which is a contradition because $S$ is square-free.
 Set $S/H=\{H+t_1, H+t_2,\cdots,H+t_k\}$, where $0<t_i\leqslant kp-1$ and $1\leqslant i\leqslant k$. From Lemma \ref{transv-fac} and Lemma \ref{factor-GX}, we conclude that
\begin{align}\label{Total-Z-Z/H}
C\in \Tau (\mathbb{Z}_n,S)&\Leftrightarrow \mathbb{Z}_n=C\oplus S\nonumber\\
&\Leftrightarrow \mathbb{Z}_n/H=(C/H)\oplus (S/H), ~ H\cap (C-C)=\{0\}\nonumber\\
& \Leftrightarrow C/H\in\Tau(\mathbb{Z}_n/H,S/H).
\end{align}
Since $H=(kp)\mathbb{Z}_n$, we have $s\not\equiv s^{\prime}~ (\mathnormal{mod}\ k)$, for distinct $H+s, H+ s^{\prime}\in S/H$, if and only if $t_i\not\equiv t_j~ (\mathnormal{mod}\ k)$, for distinct $i, j\in \{1, 2, \cdots, k\}$.

Now, note that the set 
\begin{equation}\label{SetT}
T=\{t_1, t_2, \dots, t_k\},
\end{equation}
 is aperiodic in $\mathbb{Z}_n$. Suppose, for contradiction, that it is not. Then there exists some $g \in \mathbb{Z}_n$, with $g \ne 0$, such that $T + g = T$. This means that for every $t_i$ in the set, there exists some $t_j$ such that $t_i + g = t_j$.
It follows that $H + t_i + g = H + t_j$, or equivalently, $H + t_i + H + g = H + t_j$. However, this contradicts Lemma \ref{GX-aperiodic}, because it implies that $H + g$ is a period of $X/H$ in $\mathbb{Z}_n/H$.  Thus, Case 2 reduces to a situation similar to Case 1, and it suffices to consider Case 1 to complete the proof.

Assume that $\Gamma$ admits a total perfect code. From Case 1, it follows that $p=2$, and therefore $n/|H|=2k$. Moreover, from \eqref{Total-Z-Z/H}, $\mathrm{CS}(\mathbb{Z}_n/H, S/H)$ also admits a total perfect code, say $C/H$. Moreover, it can be seen that $\mathbb{Z}_n/H\simeq \mathbb{Z}_{2k}$ via the group
isomorphism
$$\varphi : \mathbb{Z}_n/H\rightarrow \mathbb{Z}_{2k}, ~~~\varphi(H+x)=x, ~\text{where}~0\leqslant x<2k. $$
Therefore, by Lemma \ref{IsomorphicGroups}, $\mathrm{CS}(\mathbb{Z}_{2k},T)$, where $T$ is as \eqref{SetT}, admits $C$ as a total perfect code.
 Hence, it follows from Case 1 that $t_i\not\equiv t_j~ (\mathnormal{mod}\ k)$, for distinct $i, j\in \{1, 2, \cdots, k\}$, as required. 

Let $p=2$, and suppose that $t_i\not\equiv t_j~ (\mathnormal{mod}\ k)$, for distinct $i, j\in \{1, 2, \cdots, k\}$. From Case 1, $\mathrm{CS}(\mathbb{Z}_{2k},T)$  admits a total perfect code. By the preceding discussion, it follows that $\mathrm{CS}(\mathbb{Z}_n/H, S/H)$ also admits a total perfect code. Then, by \eqref{Total-Z-Z/H}, $\Gamma$ likewise admits  a total perfect code.

Moreover, from Case 1, any total perfect code of $\mathrm{CS}(\mathbb{Z}_{2k},T)$ is a coset of the subgroup $k\mathbb{Z}_{2k}=\{0,k\}$ in $\mathbb{Z}_{2k}$, and hence, is of the form $\{i, i+k\}$, where $0\leqslant i\leqslant k-1$.  Again by the discussion above, the total perfect codes of $\mathrm{CS}(\mathbb{Z}_n/H, S/H)$ are of the form $H+\{i, i+k\}$, where $0\leqslant i\leqslant k-1$. Therefore,  for any $H+x \in \mathbb{Z}_n/H$ , there exists a unique $H+c \in H+\{i, i+k\}$ such that $H+x+c\in S/H$.

Since $H$ is the stabilizer of $S$ in $\bz_n$, $h+x+c \in S$, for all $h
\in H$. Hence, the total perfect codes of $\Gamma$ are of the general form
$\{h_1+i, h_2+i+k\}$, where $h_1,h_2\in H$ and $0\leqslant i\leqslant k$.
\end{proof}
Before continuing with this section, we review the definition of the polynomial $f_A(x)$ that is associated with the non-empty set $A$ as follows
\begin{equation}\label{f_A}
f_A(x)=\sum_{a\in A}x^a.
\end{equation}
 This polynomial has been defined in  \cite{Feng-2017} and the following lemma is the counterpart of \cite[Lemma 2.4]{Feng-2017}  for Cayley sum graphs.
\begin{lemma}\label{LemTotal-q(x)}
For a subset $C\subseteq \bz _n$, we have $C\in \Tau(\bz _n , S)$ if and only if there exists $q(x)\in \bz [x]$  such that
$$
f_{-C}(x)f_S(x)=(x^n-1)q(x)+(x^{n-1}+\cdots +x+1).
$$
\end{lemma}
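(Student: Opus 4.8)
The plan is to translate the combinatorial condition $C \in \Tau(\bz_n, S)$ into the polynomial identity by using the characterization of total perfect codes as factorizations (Remark~\ref{trans-fac-total}) together with the bookkeeping that the generating polynomial $f_A(x)$ provides. The starting observation is that, working modulo $x^n - 1$, the polynomial $f_A(x)f_B(x)$ counts, for each exponent $r \in \{0, 1, \dots, n-1\}$, the number of ways to write $r \equiv a + b \pmod{n}$ with $a \in A$ and $b \in B$. Thus the coefficient of $x^r$ in $f_{-C}(x)f_S(x)$, reduced mod $x^n - 1$, is exactly the number of pairs $(c, s) \in C \times S$ with $(-c) + s \equiv r \pmod{n}$.

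First I would recall from Remark~\ref{trans-fac-total} that $C \in \Tau(\bz_n, S)$ holds if and only if $\bz_n = (-C) \oplus S$, i.e.\ every $r \in \bz_n$ has a \emph{unique} representation $r = (-c) + s$ with $c \in C$ and $s \in S$. In terms of the coefficient count above, this uniqueness-and-existence statement says precisely that the coefficient of $x^r$ in $f_{-C}(x)f_S(x) \bmod (x^n - 1)$ equals $1$ for every $r \in \{0, 1, \dots, n-1\}$. Equivalently, the reduction of $f_{-C}(x)f_S(x)$ modulo $x^n - 1$ is the polynomial $x^{n-1} + \cdots + x + 1$. This last equivalence is the crux: a polynomial with nonnegative integer coefficients reduces to the all-ones polynomial mod $x^n - 1$ exactly when each residue class is hit exactly once.

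Next I would convert "reduces modulo $x^n - 1$ to $x^{n-1} + \cdots + x + 1$'' into the stated divisibility form. Since $f_{-C}(x)f_S(x)$ and $x^{n-1} + \cdots + x + 1$ are both in $\bz[x]$, their difference is divisible by $x^n - 1$ in $\bz[x]$ precisely when the difference vanishes mod $x^n - 1$; because $x^n - 1$ is monic, polynomial division in $\bz[x]$ yields a quotient $q(x) \in \bz[x]$ with
$$
f_{-C}(x)f_S(x) - (x^{n-1} + \cdots + x + 1) = (x^n - 1)q(x),
$$
which rearranges to the desired identity. Conversely, given such a $q(x)$, reducing mod $x^n - 1$ forces the coefficient count to be identically $1$, hence the factorization $\bz_n = (-C) \oplus S$ and so $C \in \Tau(\bz_n, S)$ by Remark~\ref{trans-fac-total}.

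The main obstacle I anticipate is handling the subtlety that reduction mod $x^n - 1$ can collapse several monomials of $f_{-C}(x)f_S(x)$ onto the same residue, so one must argue carefully that the \emph{reduced} coefficients being all $1$ is genuinely equivalent to the set-theoretic factorization, rather than merely to $|C|\,|S| = n$. The clean way to do this is to note that each reduced coefficient is a nonnegative integer equal to the number of representations of $r$, so the reduced coefficients sum to $|C|\,|S|$; they are all equal to $1$ iff every class has exactly one representation, with no cancellation possible since all counts are nonnegative. This nonnegativity is what makes the equivalence tight, and it is the one point where I would be explicit rather than routine.
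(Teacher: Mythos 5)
Your argument is correct and follows essentially the same route as the paper's own proof: both translate $C\in\Tau(\bz_n,S)$ into the unique-representation condition $\bz_n=(-C)\oplus S$ and then read off that the coefficients of $f_{-C}(x)f_S(x)$ reduced modulo $x^n-1$ count representations, hence must all equal $1$. Your write-up is in fact somewhat more careful than the paper's, since you make explicit the nonnegativity of the reduced coefficients and the passage from congruence modulo $x^n-1$ to the stated divisibility form via monic division, both of which the paper leaves implicit.
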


\begin{proof}
We  know that $C\in \Tau(\bz _n , S)$ if and only if for any $i\in \bz _n$, there exists a unique $c\in C$ such that $i+c\equiv s~ (\mathnormal{mod}\ n)$, for some $s\in S$. So, 
$$\bz _n =\{s+(-c) ~|~ s\in S, c\in C\}.$$
It can be easily seen that 
\begin{align*}
x^n&\equiv 1~ (\mathnormal{mod}\ (x^n-1)),\\
x^{n+1}&\equiv x~( \mathnormal{mod}\ (x^n-1)),\\
x^{n+2}&\equiv x^2~ (\mathnormal{mod}\ (x^n-1)),\\
&\vdots 
\end{align*}
Therefore,  we have
$$f_S(x)f_{-C}(x)=\sum_{s\in S, ~ c\in -C}x^{s+c}\equiv 1+x+\cdots +x^{n-1}~~(\mathnormal{mod}\ (x^n-1)).$$
\end{proof}
The following theorem is similar to \cite[Theorem 1.3] { Feng-2017} but for Cayley sum graphs.
\begin{theorem}\label{p-circulant}
Suppose that $n$ is  a positive integer and $p$ is an odd prime, where $p \mid n$.
The connected Cayley sum graph $\rm{CS}(\bz_n,S)$ of degree $p$ admits a total perfect code if and only if  $s\not\equiv s^{\prime}~ (\mathnormal{mod}\ p)$, for distinct $s, s^{\prime}\in S$.
\end{theorem}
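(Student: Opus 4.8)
For sufficiency the work is already done: since $p$ is an odd prime dividing $n$ and a simple graph of degree $p$ has at least $p+1$ vertices, we have $n>p$ and hence $n\geq 2p\geq 6\geq 4$, so Lemma \ref{k|n-distinct} applies verbatim with $k=p$ and produces the total perfect code $p\mathbb{Z}_n$. Thus I would concentrate on necessity: assuming $\mathrm{CS}(\mathbb{Z}_n,S)$ admits a total perfect code $C$, I must show $S$ meets every residue class modulo $p$, which—because $|S|=p$—is precisely $s\not\equiv s' \pmod p$ for distinct $s,s'\in S$.

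The first step is to pass to factorizations. By Remark \ref{trans-fac-total} we have $\mathbb{Z}_n=(-C)\oplus S$, and Lemma \ref{factor-conditions} gives $|{-C}|=n/p$. The crucial idea is to reduce modulo the stabilizer of the \emph{code side}. Set $K:=G_{-C}$. By Lemma \ref{factor-GX} the factorization descends to $\mathbb{Z}_n/K=\big((-C)/K\big)\oplus\big(S/K\big)$ together with $K\cap(S-S)=\{0\}$; the latter makes the projection $\pi\colon\mathbb{Z}_n\to\mathbb{Z}_n/K$ injective on $S$, so $|S/K|=|S|=p$. By Lemma \ref{GX-aperiodic}, $(-C)/K$ is aperiodic, so since $|S/K|=p$ is a prime power, Lemma \ref{factor-prime} (applied in the cyclic group $\mathbb{Z}_n/K$) forces $S/K$ to be periodic. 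As $|G_{S/K}|$ divides $p$ and exceeds $1$, it equals $p$, and therefore $S/K$ is a single coset of the unique order-$p$ subgroup $\bar H$ of $\mathbb{Z}_n/K$.

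The second step is to use connectivity to control the index of $K$. Since $S/K$ is an $\bar H$-coset, $(S/K)-(S/K)=\bar H$, hence $S-S\subseteq\pi^{-1}(\bar H)=:\tilde H$, a subgroup of order $p|K|$. By Lemma \ref{Connectivity-CS} connectivity gives $[\mathbb{Z}_n:\langle S-S\rangle]\leq 2$, and as $\langle S-S\rangle\subseteq\tilde H$ this yields $n/|K|=p\cdot[\mathbb{Z}_n:\tilde H]\leq 2p$; since $|K|$ divides $n/p$ we also have $p\mid(n/|K|)$, so $n/|K|\in\{p,2p\}$. Finally, $|K|\mid n/p$ gives $K\subseteq p\mathbb{Z}_n$, so reduction modulo $p$ factors as $\bar\rho\circ\pi$ for a surjection $\bar\rho\colon\mathbb{Z}_n/K\to\mathbb{Z}_p$ whose kernel has order $(n/|K|)/p\in\{1,2\}$. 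This is exactly where oddness of $p$ is used: $\gcd\big(p,(n/|K|)/p\big)=1$, so $\bar H\cap\ker\bar\rho=\{0\}$ and $\bar\rho$ restricts to an isomorphism $\bar H\to\mathbb{Z}_p$. Hence $\bar\rho$ carries the coset $S/K$ bijectively onto $\mathbb{Z}_p$, and composing with the bijection $\pi|_S$ shows that $s\mapsto s\bmod p$ is a bijection $S\to\mathbb{Z}_p$, which is the desired conclusion.

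The step I expect to be the genuine obstacle is exactly the one the above is built to overcome: the branch where the periodic factor is $-C$ rather than $S$, which can occur only when $p^2\mid n$. A purely polynomial argument via Lemma \ref{LemTotal-q(x)}—reducing the identity modulo $\Phi_p(x)=1+x+\cdots+x^{p-1}$ and using that $\mathbb{Z}[x]/(\Phi_p)$ is an integral domain—quickly yields the dichotomy $\Phi_p\mid f_S$ or $\Phi_p\mid f_{-C}$, the first alternative being precisely the statement that $S$ is a complete residue system. But the second alternative cannot be excluded by congruence or character data alone: if $-C$ is balanced modulo $p$, then the mod-$p$ reduction of $\mathbb{Z}_n=(-C)\oplus S$ holds for \emph{every} $S$, so no contradiction is visible at the level of $\mathbb{Z}_p$. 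It is the connectivity hypothesis (forcing $n/|K|\leq 2p$) together with the oddness of $p$ that breaks this tie, which is why I would run the structural reduction above instead of trying to finish with cyclotomic evaluations.
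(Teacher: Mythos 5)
Your argument is correct, but your necessity proof takes a genuinely different route from the paper's. The paper runs the generating-function method: it applies Lemma \ref{LemTotal-q(x)} and imports ``equation (7)'' from the proof of \cite[Theorem 1.1]{Feng-2017} to obtain the dichotomy ``either $S$ is a complete residue system modulo $p$, or all elements of $S$ are congruent modulo $p$,'' and then eliminates the second branch exactly where you predicted it would have to --- by connectivity (all $s_i\equiv 0~(\mathnormal{mod}\ p)$ contradicts $\left\langle S\right\rangle=\bz_n$; all $s_i\equiv r\neq 0~(\mathnormal{mod}\ p)$ forces $\left\langle S-S\right\rangle\subseteq p\bz_n$, of index at least $p>2$, contradicting Lemma \ref{Connectivity-CS}). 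You instead run the structural factorization machinery (Lemmas \ref{factor-GX}, \ref{GX-aperiodic} and \ref{factor-prime}) on the stabilizer $K$ of $-C$, deduce that $S/K$ is a single coset of the order-$p$ subgroup of $\bz_n/K$, and then use connectivity to pin down $n/|K|\in\{p,2p\}$ and the oddness of $p$ to see that this coset reduces bijectively modulo $p$; each step checks out against the cited lemmas (in particular $|K|$ divides $|{-C}|=n/p$, which legitimizes both $p\mid n/|K|$ and $K\subseteq p\bz_n$). What your approach buys is self-containedness --- the paper's proof leans on an unquoted equation from another paper and is only a sketch on its own --- and it is stylistically the same reduction the paper itself deploys later in Theorem \ref{Periodic-pq} and Lemma \ref{Lem-pq}. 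What the paper's approach buys is brevity and a uniform template that generalizes directly to degree $p^l$ (Theorem \ref{pl-circulant}), whereas your argument uses primality of $|S/K|$ to conclude that the periodic factor is a single coset, a step that does not carry over verbatim to prime powers. Both proofs consume the connectivity hypothesis and the oddness of $p$ at the same logical juncture, and your closing diagnosis of why cyclotomic data alone cannot exclude the branch where $-C$ is the balanced factor is accurate.
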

\begin{proof}
The sufficiency follows from Lemma \ref{k|n-distinct}. For the necessity, we use an argument similar to that of \cite[Theorem 1.1] { Feng-2017} but by setting $S=\{s_0, s_1, \dots, s_{p-1}\}$ and using Lemma \ref{LemTotal-q(x)}. Since in the current case  $s_0\neq 0$, we can conclude from the equation (7) in the proof of \cite[Theorem 1.1] { Feng-2017} that all $t_i$ are equal and consider two following cases.
\begin{itemize}
\item[(i)] If all $t_i=0$, then all $s_i$ are multiplications of $p$. So, $\left\langle S\right\rangle$ cannot generate $\bz _n$ and this contradicts the connectivity of $\CS(\bz _n, S)$, see Lemma \ref{Connectivity-CS}.
\item[(ii)] If all $t_i=r$, where $0 <r<p$, then any element of $S$ can be written as $s_i=k_ip+r$, for some integer $k_i$. So, the subgroup generated by $S-S$ will be a subset of $p\bz_n\simeq \bz_{n/p}$, and hence 
$$|\bz _n : \left\langle S-S\right\rangle |\geqslant \frac{n}{n/p}=p.$$
Therefore $|\bz _n : \left\langle S-S\right\rangle |$ is always greater than 2 because $p$ is an odd prime which is a contradiction with Lemma \ref{Connectivity-CS} and the connectivity of $\CS(\bz_n,S)$.
\end{itemize}
\end{proof}
The following theorem is a generalization of the above theorem which is proved by an argument similar to that of 
 \cite[Theorem 1.2]{ Feng-2017} by setting $S=\{s_0, s_1, \dots, s_{p^l-1}\}$ and using Lemma \ref{LemTotal-q(x)}. Thus, we omit its proof.
\begin{theorem}\label{pl-circulant}
Suppose that $n$ and $l$ are positive integers and $p$ is a prime with  $p^l  \mid n$, and  $p^{l+1}\nmid n$.
The connected Cayley sum graph $\rm{CS}(\bz_n,S)$ of degree $p^l$ has a total perfect code if and only if $s\not\equiv s^{\prime}~ (\mathnormal{mod}\ p^l)$, for distinct $s, s^{\prime}\in S$.
\end{theorem}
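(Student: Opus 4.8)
The plan is to get sufficiency for free from Lemma~\ref{k|n-distinct} and to prove necessity by a cyclotomic-polynomial analysis of the identity in Lemma~\ref{LemTotal-q(x)}, mirroring the proof of Theorem~\ref{p-circulant} but tracking all $p$-power cyclotomic factors simultaneously. The first move is to reformulate the target. Since $|S|=p^l$, the condition ``$s\not\equiv s'\ (\mathrm{mod}\ p^l)$ for distinct $s,s'\in S$'' is equivalent to $S$ being a complete residue system modulo $p^l$, and hence to the divisibility
\[
\frac{x^{p^l}-1}{x-1}\ \mid\ f_S(x)\quad\text{in }\bz[x],
\]
where $\tfrac{x^{p^l}-1}{x-1}=\prod_{j=1}^{l}\Phi_{p^j}(x)$ is the product of the $p$-power cyclotomic polynomials $\Phi_{p^j}$. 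Equivalently, $f_S$ must vanish at every nontrivial $p^l$-th root of unity. Thus the whole necessity argument reduces to showing that $\Phi_{p^j}\mid f_S$ for each $j\in\{1,\dots,l\}$.

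First I would extract two facts from Lemma~\ref{LemTotal-q(x)}, having set $S=\{s_0,\dots,s_{p^l-1}\}$. Evaluating the identity $f_{-C}(x)f_S(x)=(x^n-1)q(x)+(x^{n-1}+\cdots+1)$ at $x=1$ gives $f_{-C}(1)f_S(1)=n$, so $|C|=f_{-C}(1)=n/p^l=:m$, which is coprime to $p$ precisely because $p^{l+1}\nmid n$. Next, since $p^l\mid n$, every $p^j$ with $1\le j\le l$ divides $n$ and exceeds $1$, so evaluating the identity at a primitive $p^j$-th root of unity $\zeta$ annihilates both $x^n-1$ and $(x^n-1)/(x-1)=x^{n-1}+\cdots+1$, yielding $f_{-C}(\zeta)f_S(\zeta)=0$. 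By irreducibility of $\Phi_{p^j}$ over $\mathbb{Q}$ together with Gauss's lemma (both factors lie in $\bz[x]$), this gives the dichotomy $\Phi_{p^j}\mid f_S$ or $\Phi_{p^j}\mid f_{-C}$ in $\bz[x]$.

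The crux is eliminating the second alternative. Reducing modulo $p$ one has $\Phi_{p^j}(x)\equiv(x-1)^{p^j-p^{j-1}}\pmod p$, so in particular $(x-1)$ divides the reduction $\overline{\Phi_{p^j}}$ in $\mathbb{F}_p[x]$. On the other hand $\overline{f_{-C}}(1)=|C|=m\not\equiv0\pmod p$, whence $(x-1)\nmid\overline{f_{-C}}$. Therefore $\Phi_{p^j}\mid f_{-C}$ is impossible, and the dichotomy collapses to $\Phi_{p^j}\mid f_S$ for every $j\in\{1,\dots,l\}$. Being pairwise coprime irreducibles, their product $(x^{p^l}-1)/(x-1)$ divides $f_S$, which is exactly the reformulated conclusion; combined with $f_S(1)=p^l$ it shows $f_S(x)\equiv 1+x+\cdots+x^{p^l-1}\pmod{x^{p^l}-1}$, i.e.\ the elements of $S$ are pairwise distinct modulo $p^l$.

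I expect the main obstacle to be exactly the dichotomy step: a priori each $\Phi_{p^j}$ could be absorbed by $f_{-C}$ rather than by $f_S$, and it is the hypothesis $p^{l+1}\nmid n$ (forcing $(|C|,p)=1$) that, via the reduction $\Phi_{p^j}\equiv(x-1)^{p^j-p^{j-1}}\pmod p$, rules this out. This valuation count at $x=1$ is what replaces the connectivity-based elimination of the degenerate configurations (all residues coinciding) used in Theorem~\ref{p-circulant}; here connectivity enters only to guarantee that the graph is genuinely $p^l$-regular and that Lemma~\ref{k|n-distinct} applies on the sufficiency side. A final routine check to carry out is that only the $p$-power divisors are relevant: divisors $d\mid n$ with $d\nmid p^l$ impose vanishing conditions on the product $f_{-C}f_S$ as well, but they constrain residues modulo $d$, not modulo $p^l$, and so may be safely ignored for the stated conclusion.
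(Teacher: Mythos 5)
Your proof is correct and takes essentially the approach the paper intends: the paper omits the argument, deferring to the cyclotomic-polynomial analysis of \cite[Theorem 1.2]{Feng-2017} applied to the identity of Lemma \ref{LemTotal-q(x)}, which is exactly what you carry out. Your key step --- using $p^{l+1}\nmid n$ to get $p\nmid |C|=f_{-C}(1)$ and hence, via $\Phi_{p^j}(x)\equiv (x-1)^{p^j-p^{j-1}}\ (\mathnormal{mod}\ p)$, to force every $\Phi_{p^j}$ to divide $f_S$ rather than $f_{-C}$ --- is precisely the mechanism that replaces the connectivity case analysis of Theorem \ref{p-circulant}.
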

\begin{theorem}\label{Periodic-pq}
Suppose that $n\geqslant 6$ is an integer and $p$ and $q$ are primes where $pq\mid n$ and $\left(pq, n/(pq)\right)=1$. Let $S$ is a square-free and periodic subset of $\bz_n$ with $|S|=pq$. Then the connected Cayley sum graph $\CS(\bz_n,S)$ admits a total perfect code if and only if  $s\not\equiv s^{\prime}~ (\mathnormal{mod}\ pq)$, for distinct $s, s^{\prime}\in S$.
 \end{theorem}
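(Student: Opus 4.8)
The plan is to treat the sufficiency by the general construction and to reduce the necessity to the prime-degree theorems already in hand. For the sufficiency, $pq=|S|$ divides $n$ and the elements of $S$ are pairwise distinct modulo $pq$, so Lemma~\ref{k|n-distinct} produces the total perfect code $pq\bz_n$. For the necessity, suppose $C\in\Tau(\bz_n,S)$ and let $H$ be the stabilizer of $S$. As $S$ is periodic, $|H|>1$, and since $S$ is a union of $H$-cosets we have $|H|\mid|S|=pq$, so $|H|\in\{p,q,pq\}$. Two sub-cases are immediate: if $|H|=pq$ then $|S|=|H|$ and, because $(pq,n/(pq))=1$, Lemma~\ref{|H|=|X|} already gives $s\not\equiv s'\pmod{pq}$ for distinct $s,s'\in S$; and if $p=q$ then $\CS(\bz_n,S)$ has degree $p^2$ with $p^2\mid n$ and $p^3\nmid n$ (as $(p^2,n/p^2)=1$), so Theorem~\ref{pl-circulant} applies verbatim. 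It therefore remains to treat $p\neq q$ with $|H|$ a single prime $\ell\in\{p,q\}$; set $k:=pq/\ell$, the other prime.

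The core of the argument is to pass to $\bz_n/H$. By Remark~\ref{trans-fac-total} we have $\bz_n=(-C)\oplus S$, and Lemma~\ref{factor-GX} converts this into $\bz_n/H=((-C)/H)\oplus(S/H)$ with $H\cap(C-C)=\{0\}$, i.e.\ $C/H\in\Tau(\bz_n/H,S/H)$. In this quotient $S/H$ has prime size $k$, is aperiodic by Lemma~\ref{GX-aperiodic}, and is square-free: a square $H+2y$ belonging to $S/H$ would give $2y\in S+H=S$, against the square-freeness of $S$. Connectivity also descends, since $\langle S\rangle=\bz_n$ yields $\langle S/H\rangle=\bz_n/H$, while each $h\in H$ equals $(s+h)-s\in S-S$, so $H\subseteq\langle S-S\rangle$ and $|\bz_n/H:\langle (S/H)-(S/H)\rangle|=|\bz_n:\langle S-S\rangle|\le 2$; Lemma~\ref{Connectivity-CS} then keeps the quotient graph connected. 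Fixing an isomorphism $\bz_n/H\cong\bz_{n/\ell}$, I thus obtain a connected, square-free, aperiodic Cayley sum graph of prime degree $k$ possessing a total perfect code.

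When $k$ is odd I would now invoke Theorem~\ref{p-circulant} on this quotient to conclude that the images of distinct elements of $S$ are pairwise distinct modulo $k$, and then lift this to modulo $pq$. Using the splitting $\bz_{pq}\cong\bz_p\times\bz_q$ (valid as $p\neq q$), I would observe that the $\ell$ elements within a single $H$-coset share a common residue modulo $k$ but exhaust all residues modulo $\ell$, whereas the quotient conclusion separates distinct $H$-cosets modulo $k$; a direct coordinate count then shows that the $pq$ elements of $S$ occupy every residue pair in $\bz_p\times\bz_q$ exactly once, so they are pairwise distinct modulo $pq$, as wanted.

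The main obstacle is the surviving possibility $k=2$, which forces one of $p,q$ to equal $2$ and $|H|=\ell$ to be the odd prime, a case where Theorem~\ref{p-circulant} does not apply. Here I would show that the quotient graph admits \emph{no} total perfect code, contradicting the reduction above and so eliminating the case. Indeed $\bz_n/H\cong\bz_{2m}$ with $m=n/(pq)$ odd (because $(pq,m)=1$), and $S/H=\{a,b\}$ is a square-free pair, so both $a$ and $b$ are odd, the squares of $\bz_{2m}$ being exactly its even elements. A factorization $\bz_{2m}=\{a,b\}\oplus C'$ would require, on counting even elements by parity, precisely $m/2$ odd members in $C'$, which is impossible for odd $m$. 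This parity obstruction---the precise point at which the square-free and coprimality hypotheses are used---is the crux of the proof; once it rules out the even-degree quotient, all cases are covered and the necessity follows.
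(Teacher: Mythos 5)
Your proposal is correct and its skeleton is the same as the paper's: sufficiency via Lemma \ref{k|n-distinct}, and for necessity a case split on $|H|\in\{p,q,pq\}$, with the $|H|=pq$ case killed by Lemma \ref{|H|=|X|} and the remaining case reduced via Lemma \ref{factor-GX} to a prime-degree quotient $\CS(\bz_{n/|H|},T)$, to which Theorem \ref{p-circulant} is applied before lifting the conclusion back modulo $pq$. Your lift via $\bz_{pq}\cong\bz_p\times\bz_q$ and a counting argument is a mild stylistic variant of the paper's direct computation with $s-s'=ckq+t-t'$; both are fine. Where you genuinely diverge --- and improve on the paper --- is in the two boundary cases. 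First, you dispose of $p=q$ explicitly via Theorem \ref{pl-circulant}; the paper is silent on this. Second, and more substantively, when $|H|$ is the odd prime and the quotient degree $k$ equals $2$, Theorem \ref{p-circulant} is inapplicable (it is stated only for odd prime degree), and the paper's ``without loss of generality $|H|=p$'' does not rescue this, since $|H|$ is determined by $S$. Your parity obstruction --- $S/H=\{a,b\}$ must consist of two odd elements of $\bz_{2m}$ by square-freeness, so a factorization $\bz_{2m}=\{a,b\}\oplus C'$ would force the $m$ even elements to pair up as $\{a+c,b+c\}$ over odd $c\in C'$, impossible for $m$ odd --- is correct (and consistent: in that configuration the congruence condition also fails, so both sides of the equivalence are false). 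This fills a step the paper glosses over, at the cost of one extra ad hoc argument; otherwise the two proofs buy the same thing.
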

 \begin{proof}
 The sufficiency follows from Lemma \ref{k|n-distinct}. 
 
For the necessity, suppose that $n=kpq$, where $(k,pq)=1$, $C\in\Tau(\bz_n,S)$ and  $H$ is the subgroup of periods of $S$ in $\bz_n$, where $|H|>1$.  Since $|H| \mid |S|$, $|H|\in \{p,q,pq\}$. So, we continue the proof in the following two cases. 

\textsf{Case 1.} $|H|=pq=|S|$. Since $\left(pq, n/(pq)\right)=1$, the proof is completed from Lemma \ref{|H|=|X|}. 

\textsf{Case 2.} $|H|\in \{p, q\}$. without loss of generality, we suppose that $|H|=p$. So, $H=(kq)\bz_n$ and $|S/H|=q$. 
By applying an argument similar to that of Case 2 in Theorem \ref{S/H}, we can see that $S\cap H=\emptyset$ and we set $S/H=\{H+t_1, \dots, H+t_q\}$, where $0\leqslant t_i \leqslant kq-1$. Also, from \eqref{Total-Z-Z/H}, $\CS(\bz _n/H, S/H)$ admits a total perfect code. Since,
we  have $\bz _n/H\simeq \bz _{kq}$, $\CS(\bz _{kq},T)$ admits a total perfect code, where $T=\{t_1, t_2, \dots , t_q\}$. By Theorem \ref{p-circulant}, it follows that
$t\not\equiv t^{\prime}~ (\mathnormal{mod}\ |T|=q)$, for distinct $t, t^{\prime}\in T$.

For distinct $s, s^{\prime}\in S$, we have $s\in H+t$ and $s^{\prime}\in H+t^{\prime}$, for some $t, t^{\prime}\in T$. If $t=t^{\prime}$, then $s- s^{\prime}=ckq$ and $c$ is an integer with $0<|c|\leqslant p-1$. We know that $(k,pq)=1$ and $p$ is not a divisor of $ck$, and so, $s\not\equiv s^{\prime}~ (\mathnormal{mod}\ pq)$. If 
$t\neq t^{\prime}$, then $s- s^{\prime}=ckq+t- t^{\prime}$, where $c$ is an integer such that $|c|\leqslant p-1$. Since $q$ does not divide   
$t- t^{\prime}$, $q$ is not also a divisor of $s- s^{\prime}$. Hence, $s \not\equiv s^{\prime}~ (\mathnormal{mod}\ pq)$. 
 \end{proof}
 Before discussing the next theorem, we need to present some preliminary lemmas. The following lemma is concluded from \cite[Lemma 2.2]{Wang-2024}.
 \begin{lemma}\label{Cs-Partition}
 Let $G$ be an abelian group and $S$ be a  square-free subset of $G$.
 If $C\subseteq G$ is a total perfect code of $\CS(G,S)$, then $\{-C+s: s\in S\}$ is a partition of $G$.
 \end{lemma}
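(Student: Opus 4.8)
The plan is to deduce the partition property directly from the factorization characterization of total perfect codes recorded in Remark \ref{trans-fac-total}. First I would use the square-free hypothesis to pin down the neighborhoods: since $S$ contains no squares, $2g\notin S$ for every $g\in G$, so the neighborhood of $g$ in $\CS(G,S)$ is exactly $S-g$ (in particular $g\notin S-g$, so no loop is deleted), and the graph is $|S|$-regular. Hence Remark \ref{trans-fac-total} applies, and the hypothesis that $C$ is a total perfect code is equivalent to the factorization $G=(-C)\oplus S$; that is, every $g\in G$ has a \emph{unique} expression $g=(-c)+s$ with $c\in C$ and $s\in S$.

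With this in hand, the two defining properties of a partition split into the two halves of the factorization. For the covering half I would invoke condition (a) of Lemma \ref{factor-conditions} for $G=(-C)\oplus S$, which gives $\bigcup_{s\in S}\bigl((-C)+s\bigr)=(-C)+S=G$; each block $(-C)+s$ is nonempty because $C\neq\emptyset$. For the disjointness half I would argue from the uniqueness content of the factorization: if $g\in\bigl((-C)+s\bigr)\cap\bigl((-C)+s'\bigr)$, then $g=(-c)+s=(-c')+s'$ for some $c,c'\in C$, and uniqueness of the decomposition forces $s=s'$ (and $c=c'$). Thus distinct $s,s'$ yield disjoint blocks, and since the blocks are nonempty they are in fact pairwise distinct. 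Combining the two halves shows that $\{-C+s:s\in S\}$ is a partition of $G$.

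I do not anticipate a genuine obstacle in this argument; the entire content sits in translating ``exactly one neighbor'' into ``unique factorization.'' The only point that needs care is the opening reduction: one must confirm that the square-free assumption makes the neighborhood of each vertex precisely $S-g$, so that the degree equals $|S|$ and Remark \ref{trans-fac-total} is legitimately applicable. Once the factorization $G=(-C)\oplus S$ is secured, both the covering and the disjointness are immediate consequences of part (a) of Lemma \ref{factor-conditions} and the uniqueness built into the definition of $\oplus$.
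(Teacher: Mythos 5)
Your proposal is correct. Note, however, that the paper does not actually prove this lemma: it is stated as being ``concluded from'' Lemma 2.2 of the cited work of Wang, Wei, Xu and Zhou, with no argument supplied. So there is no in-paper proof to compare against; what you have done is supply a self-contained derivation, and it is a valid one. Your key reduction --- that square-freeness of $S$ forces $2g\notin S$, hence the neighborhood of $g$ is exactly $S-g$ and ``exactly one neighbor in $C$'' translates into the unique decomposition $g=(-c)+s$, i.e.\ $G=(-C)\oplus S$ as in Remark \ref{trans-fac-total} --- is precisely the correspondence the paper itself sets up in Lemma \ref{transv-fac} and uses throughout, so your argument sits squarely within the paper's own framework. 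Both halves of the partition claim then follow as you say: covering is immediate from $G=(-C)+S$ (you do not even need Lemma \ref{factor-conditions}(a) for this; it is part of the definition of $\oplus$), and disjointness of the blocks $-C+s$ for distinct $s$ is exactly the uniqueness of the second component in the factorization. The one point worth being explicit about, which you correctly flag, is that square-freeness rules out the degenerate case $c=g$ with $2g\in S$, so that ``neighbor in $C$'' really does mean ``$c\in C$ with $g+c\in S$'' with no exclusions.
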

 It is proved that for a Cayley graph with vertices $G$ and connection set  $S$ with a total perfect code $C$ that $C+s$ is also a total perfect code, where $s\in S$, see \cite[Lemma 3.1]{Zhou-2016}. The same statement for subgroup total perfect codes in Cayley sum graphs  is proved \cite[Corollary 2.6]{Wang-2024}. We will prove an equivalent statement for $C\in\Tau(G,S)$, when $C\subseteq G$.
 \begin{lemma}\label{C+s}
 Suppose that $G$ is an abelian group and $S$ is a  square-free subset of $G$.
 If $C\subseteq G$ is a total perfect code of $\CS(G,S)$, then $-C+s$ is also a total perfect code of $\CS(G,S)$, for each $s\in S$. 
 \end{lemma}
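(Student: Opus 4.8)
The plan is to translate the total-code property into a factorization of $G$ and then verify that factorization through the numerical and difference-set criteria of Lemma \ref{factor-conditions}. Fix $s_0 \in S$ and set $D := -C + s_0$. By Remark \ref{trans-fac-total}, $D$ is a total perfect code of $\CS(G,S)$ if and only if $G = (-D) \oplus S$; since $-D = -(-C+s_0) = C - s_0$, the whole statement reduces to proving the single factorization
$$G = (C - s_0) \oplus S.$$

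First I would record what the hypothesis supplies. Because $C \in \Tau(G,S)$, Remark \ref{trans-fac-total} yields $G = (-C) \oplus S$. Since this is a genuine factorization, conditions (b) and (c) of Lemma \ref{factor-conditions} hold for the pair $\bigl((-C),S\bigr)$, namely $\bigl((-C)-(-C)\bigr) \cap (S-S) = \{0\}$ and $|G| = |{-C}|\,|S|$. Using $(-C)-(-C) = C - C$ and $|{-C}| = |C|$, these become
$$(C - C) \cap (S - S) = \{0\} \qquad\text{and}\qquad |G| = |C|\,|S|.$$

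Next I would check these same two conditions for the candidate pair $X := C - s_0$ and $Y := S$. Translation by a fixed group element leaves cardinalities and difference sets unchanged, so $X - X = (C - s_0) - (C - s_0) = C - C$ and $|X| = |C|$. Consequently $(X - X) \cap (Y - Y) = (C-C) \cap (S-S) = \{0\}$ and $|X|\,|Y| = |C|\,|S| = |G|$, i.e. conditions (b) and (c) of Lemma \ref{factor-conditions} hold for $(X,Y)$. That lemma then delivers $G = X \oplus Y = (C - s_0) \oplus S$, which is exactly what the reduction in the first paragraph demanded, completing the argument.

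There is no deep obstacle here; the only points demanding care are bookkeeping rather than ideas. One must track the sign correctly in Remark \ref{trans-fac-total} — the code $D = -C + s_0$ forces us to factor with $-D = C - s_0$, not with $C + s_0$ — and one must invoke the translation-invariance of difference sets, which is routine in an abelian group. The square-free hypothesis on $S$ is what guarantees that $\CS(G,S)$ is $|S|$-regular and thus places us in the setting where the equivalence of Remark \ref{trans-fac-total} is available; alternatively, one could start from the partition $\{-C+s : s\in S\}$ of $G$ furnished by Lemma \ref{Cs-Partition}, but the factorization route above is the most economical.
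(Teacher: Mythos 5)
Your proof is correct, but it takes a genuinely different route from the paper's. The paper argues combinatorially: it invokes Lemma \ref{Cs-Partition} to obtain that $\{-C+s : s\in S\}$ partitions $G$, then shows by contradiction that a vertex $g$ cannot have two distinct neighbors $-c_1+s_i$ and $-c_2+s_i$ in a single cell $-C+s_i$ (such a pair would force $(-C+s_j)\cap(-C+s_k)\neq\emptyset$ for the corresponding $s_j,s_k$, violating the partition), and concludes by counting: $g$ has $|S|$ neighbors spread over $|S|$ cells with at most one per cell, hence exactly one in each $-C+s_i$. You instead stay entirely inside the factorization calculus: from $G=(-C)\oplus S$ you extract conditions (b) and (c) of Lemma \ref{factor-conditions}, observe that both are invariant under translating one factor, and reassemble the factorization $G=(C-s_0)\oplus S$, which by Remark \ref{trans-fac-total} is exactly the claim for $-C+s_0$. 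Your route is shorter, bypasses Lemma \ref{Cs-Partition} altogether, and in fact proves something stronger: since your two conditions are preserved under translation by an arbitrary $g\in G$, not only by $s_0\in S$, you obtain that $-C+g$ (and likewise $C+g$) is a total perfect code for every $g\in G$, whereas the paper's partition-based argument is tied to translates by elements of $S$. The one point to keep honest is the reliance on Remark \ref{trans-fac-total}, whose equivalence between ``total perfect code'' and ``factorization'' requires $S$ to be square-free (so that $x+c\in S$ automatically forces $x\neq c$); you flag this correctly at the end.
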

 \begin{proof}
 Set $S=\{s_1, \dots, s_k\}$. From Lemma \ref{Cs-Partition}, we know that $\{-C+s_1, \dots, -C+s_k\}$ is a partition of $G$. We prove that each vertex $g\in G$ has a unique neighbor in $-C+s_i$, where $1\leqslant i\leqslant k$. Suppose to the contrary. Let $-c_1+s_i$ and $-c_2+s_i$ be the neighbors of $g$ in $-C+s_i$, where $c_1\neq c_2$. So, we have $g -c_1+s_i =s_j$ and $g -c_2+s_i =s_k$, for some $s_j,s_k \in S$.  We have $g+s_i=c_1+s_j=c_2+s_k$, and so,  $-c_1+s_k=-c_2+s_j$ which means that $(-C+s_k )\cap (-C+s_j)\neq\emptyset$ and this is a contradiction with Lemma \ref{Cs-Partition}.
 \end{proof}
 \begin{remark}\label{0InC}
Let $C\in \Tau(G,S)$. Without loss of generality, we can suppose that $0\in C$ because otherwise if $0\notin C$, then there exists a unique $c\in C$ such that $0+c=s$, for some $s\in S$. So, $0\in -C+s$ which is a total perfect code of $\CS(G,S)$ by Lemma \ref{C+s}. 
\end{remark}
\begin{lemma}\label{Lem-pq}
Let $n\in \{pqr, pqrs: p,q,r,s ~\text{are distinct primes}\}$. Suppose that $S\subseteq \bz_n$ is square-free and aperiodic with $|S|=pq$. Moreover, suppose that $|S| \mid n$ and $(|S|, n/|S|)=1$. The connected Cayley sum graph $\CS(\bz_n,S)$ admits a total perfect code if and only if  $s\not\equiv s^{\prime}~ (\mathnormal{mod}\ pq)$, for distinct $s, s^{\prime}\in S$.
\end{lemma}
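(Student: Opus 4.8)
The sufficiency is immediate from Lemma~\ref{k|n-distinct}: as $S$ is square-free, $\Gamma$ is $pq$-regular, and distinctness modulo $pq$ makes $pq\bz_n$ a total perfect code. So the plan concentrates entirely on the necessity. Suppose $C\in\Tau(\bz_n,S)$. By Lemma~\ref{transv-fac} and Remark~\ref{trans-fac-total} this is the same as a factorization $\bz_n=A\oplus S$ with $A:=-C$, where $|A|=n/(pq)$ and $|S|=pq$; moreover the desired conclusion ``$s\not\equiv s'~(\mathrm{mod}~pq)$ for distinct $s,s'\in S$'' is precisely the statement $(S-S)\cap pq\bz_n=\{0\}$, equivalently $\bz_n=S\oplus pq\bz_n$. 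Since $n\in N$ is a good abelian group (for $n=pqr$ one may instead invoke Lemma~\ref{factor-prime}, because $|A|=r$ is a prime power) and $S$ is aperiodic, the factor $A$ must be periodic; write $K$ for its subgroup of periods, so $K\neq\{0\}$ and $|K|\mid|A|$. As $K\subseteq A-A$, Lemma~\ref{factor-conditions} gives the relation $K\cap(S-S)=\{0\}$ that will drive every lifting step.

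\textbf{The clean case} is when $K$ is the whole subgroup $pq\bz_n$ of order $n/(pq)$, which is forced whenever $|K|=|A|$, and in particular for every $n=pqr$. Then $A$ is a single coset of $K$, so $A-A=K=pq\bz_n$, and Lemma~\ref{factor-conditions} immediately yields $(S-S)\cap pq\bz_n=\{0\}$. The remaining case is $n=pqrs$ with $|K|\in\{r,s\}$, say $|K|=r$. Here I would pass to $\bz_n/K\cong\bz_{pqs}$: Lemma~\ref{factor-GX} converts $\bz_n=A\oplus S$ into a factorization $\bz_{pqs}=\bar A\oplus\bar S$ in which $\bar A=A/K$ is aperiodic of prime size $s$ (Lemma~\ref{GX-aperiodic}) and $\bar S=S/K$ has size $pq$; Lemma~\ref{factor-prime} then forces $\bar S$ to be periodic. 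Because $K\subseteq pq\bz_n$ and $K\cap(S-S)=\{0\}$, distinctness of $S$ modulo $pq$ in $\bz_n$ is equivalent to distinctness of $\bar S$ modulo $pq$ in $\bz_{pqs}$, so it suffices to establish the latter.

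\textbf{The innermost analysis} runs through the subgroup of periods $\bar K'$ of $\bar S$, whose order lies in $\{p,q,pq\}$. If $|\bar K'|=pq$, then $\bar S$ is a single coset of the order-$pq$ subgroup $s\bz_{pqs}$, which is a complete residue system modulo $pq$, finishing this branch. If $|\bar K'|=p$ (the case $q$ is symmetric), one application of Lemma~\ref{factor-GX} to the given factorization yields a two-prime factorization $\bz_{qs}=\bar{\bar A}\oplus\bar{\bar S}$ with $|\bar{\bar S}|=q$ and $|\bar{\bar A}|=s$, while a second application identifies the target $\bz_{pqs}=\bar S\oplus pq\bz_{pqs}$ with $\bz_{qs}=\bar{\bar S}\oplus q\bz_{qs}$; thus it is enough to show that $\bar{\bar S}$ is distinct modulo $q$. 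In $\bz_{qs}$ Lemma~\ref{factor-prime} forces one of the two prime-sized factors to be a coset of a full prime-order subgroup, and in either possibility—$\bar{\bar S}$ a coset of $s\bz_{qs}$, or $\bar{\bar A}$ a coset of $q\bz_{qs}$, whence $\bar{\bar A}-\bar{\bar A}=q\bz_{qs}$ and Lemma~\ref{factor-conditions} gives $(\bar{\bar S}-\bar{\bar S})\cap q\bz_{qs}=\{0\}$—one reads off that $\bar{\bar S}$ is a complete residue system modulo $q$. Unwinding the quotients returns $\bar S$ distinct modulo $pq$, hence $S$ distinct modulo $pq$.

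\textbf{The main obstacle} is this last case $n=pqrs$ with $|K|\in\{r,s\}$: I expect essentially all the work to be the bookkeeping of the nested quotients—identifying each period subgroup with the correct prime-order subgroup, verifying the coprimality needed at each use of Lemma~\ref{factor-GX}, and repeatedly invoking $K\cap(S-S)=\{0\}$ to transport ``distinct modulo $pq$'' from a quotient back to $\bz_n$. A tempting shortcut is to reduce modulo $K$ and then simply apply Theorem~\ref{Periodic-pq} to the periodic set $\bar S$ of size $pq$ in $\bz_{pqs}$; I would deliberately avoid this, because the quotient need not preserve square-freeness—for instance when $r=2$ the group $\bz_{pqs}$ has odd order, in which every element is a square, so $\bar S$ cannot be square-free—and instead carry out the purely factorization-theoretic reduction above, which needs square-freeness only at the top level, to guarantee $pq$-regularity and the sufficiency direction.
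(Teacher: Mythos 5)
Your proof is correct, and the necessity direction takes a genuinely different route from the paper's. The paper also passes to the quotient by the subgroup $H$ of periods of $-C$, but it then treats the quotient as a Cayley sum graph again: it shows $S/H$ is periodic and invokes Theorem~\ref{Periodic-pq} on $\mathrm{CS}(\mathbb{Z}_l,T)$ (hence, ultimately, the polynomial argument of Theorem~\ref{p-circulant} and the connectivity hypothesis) to get distinctness of $T$ modulo $pq$, and only then lifts back. You instead stay entirely inside factorization theory: after the reduction $\mathbb{Z}_{pqs}=\bar A\oplus\bar S$ you quotient once more by the stabilizer of $\bar S$ and finish with Lemma~\ref{factor-prime} and Lemma~\ref{factor-conditions} in the two-prime group $\mathbb{Z}_{qs}$, using $K\cap(S-S)=\{0\}$ to transport distinctness back up; square-freeness and connectivity are used only for the sufficiency/regularity statement. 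Your closing remark about the ``tempting shortcut'' is exactly on point and is in fact the step the paper takes: Theorem~\ref{Periodic-pq} is applied to $T\subseteq\mathbb{Z}_l$ without verifying that $T$ is square-free there, and it typically is not (if $l$ is odd every element of $\mathbb{Z}_l$ is a square), so strictly speaking the hypotheses of that theorem are not met in the quotient. Your purely factorization-theoretic descent avoids this issue entirely, at the modest cost of an extra explicit case split ($|K|=|A|$ versus $|K|\in\{r,s\}$, then $|\bar K'|\in\{p,q,pq\}$) where the paper gets to reuse an earlier theorem as a black box.
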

\begin{proof}
The sufficiency follows from Lemma \ref{k|n-distinct}.

To prove the necessity, suppose that $n=kpq$ with $k\geqslant 2$ and $(pq,k)=1$, and $C\in \Tau (\bz_n,S)$ with $0\in C$, see Remark \ref{0InC}. From Remark \ref{trans-fac-total}, we have $\bz _n=(-C)\oplus S$, and so from Lemma \ref{factor-conditions}, we have $|C|=n/(pq)=k$. Since $\bz_n$ is a good abelian group, see \eqref{GoodAbeli}, and $S$ is aperiodic, $-C$ must be periodic. Let $H$ be the subgroup of priods of $-C$ in $\bz _n$. We have $|H|\geqslant 2$ and $|H|\mid |C|$. Since $0\in (-C)$, $H\subseteq (-C)$.
By Lemma \ref{factor-GX}, $\bz_n/H=(S/H)\oplus (-C/H)$ and $H\cap (S-S)=\{0\}$. It follows that $H+s\neq H+s^{\prime}$, for distinct $s, s^{\prime}\in S$, because otherwise we would have $(s-s^{\prime})\in H\cap (S-S)$ which is a contradiction. Thus, $|S/H|=|S|=pq$ and we set $S/H=\{H+t_1, \dots, H+t_{pq}\}$ with $0\leqslant t_i <l$ for each $i$, where $l= n/|H|$.  Set 
\begin{equation}\label{SetT2}
T=\{t_1, t_2, \dots, t_{pq}\}.
\end{equation}
We now prove that $S\cap H=\emptyset$ or equivalently $H\notin S/H$. Suppose to the contrary and let $h\in H\cap S$. 
Since $C\in \Tau(\bz_n, S)$, there exists a unique $c\in C$ such that $h+c\in S$. So, $(-c)+(c+h)$ and $0+h$ are two different expressions for $h$, contradicting the fact that $\bz _n=(-C)\oplus S$. Since we have $\bz_n/H=(S/H)\oplus (-C/H)$, $C/H \in \Tau(\bz_n/H, S/H)$  by Remark \ref{trans-fac-total}.
Note that $\bz_n/H\simeq \bz_l$ via the group isomorphism $\varphi(H+x)=x$, and so by Lemma \ref{IsomorphicGroups}, $\CS(\bz_l, T)$, where $T$ is as in \eqref{SetT2}, also admits a total perfect code. Moreover, the Cayley sum graphs $\CS(\bz_n/H, S/H)$ and $\CS(\bz_l, T)$ are connected  because of the connectivity of  $\CS(\bz_n, S)$. By assumption, we know that $n\in \{pqr, pqrs: p,q,r,s ~\text{are distinct primes and}~ k\geqslant 1\}$, and so, $l$ belongs to $\{pq, pqr, pqs\}$ and $\bz _n/H$ is a good abelian group, see \eqref{GoodAbeli}. Since  $\bz _n/H=(-C/H)\oplus (S/H)$, we conclude from Lemma \ref{GX-aperiodic} that $-C/H$ is aperiodic in $\bz_n/H$, and hence, $S/H$ must be periodic. Suppose that $H+b\in \bz _n/H$, where $b\notin H$, is a period of $S/H$ or equivalently, $S/H + H+b=S/H$. Therefore, we have $T+b=T$ and $b$ is a period of $T$ in $\bz_l$. In particular, $b$ is not the identity element of $\bz_l$ because $\varphi$ is an isomorphism. Since $(k, pq)=1$ and $l=\left(k/|H|\right)(pq)$, we have $(l/(pq),pq)=(k/|H|,pq)=1$, and hence, $\CS(\bz_l,T)$
satisfies the assumptions of Theorem \ref{Periodic-pq}.  Therefore, we conclude that 
\begin{equation}\label{ttPrime}
t\not\equiv t^{\prime}~ (\mathnormal{mod}\ pq),~ \text{for distinct}~ t, t^{\prime}\in T.
\end{equation}
We now suppose that $s, s^{\prime}$ are distinct elements of $S$. We have $s \in H+t$ and $s^{\prime} \in H+t^{\prime}$, for some $t, t^{\prime}\in T$. If $t=t^{\prime}$, then $s-s^{\prime}\in H$ which contradicts $H\cap (S-S^{\prime})=\{0\}$. So, we have $t\neq t^{\prime}$, and consequently, $s-s^{\prime}=cl+t-t^{\prime}$ where $c$ is an integer with $|c|<|H|$. From \eqref{ttPrime}, $pq$ does not divide $t-t^{\prime}$ and we also know that $pq$ is a divisor of $l$. So, we obtain $s\not\equiv s^{\prime}~ (\mathnormal{mod}\ pq)$.
\end{proof}
\begin{theorem}\label{n-in-N}
Suppose that $n\in N$, where $N$ is the set in \eqref{GoodAbeli}, and $S$ is a square-free subset of $\bz_n$ such that $|S|\mid n$ and $\left(|S|, n/|S| \right)=1$.
Then the connected Cayley sum graph $\CS(\bz_n, S)$ admits a total perfect code if and only if $s\not\equiv s^{\prime}~ (\mathnormal{mod}\ |S|)$ for distinct $s, s^{\prime}\in S$.
\end{theorem}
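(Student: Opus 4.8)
The plan is to get the sufficiency for free from Lemma~\ref{k|n-distinct} and to prove necessity by induction on $n$ inside the family $N$. The first move is to reformulate the target condition in a way that survives quotients. Since $(|S|,n/|S|)=1$, the Chinese Remainder Theorem gives $\bz_n\cong \bz_{|S|}\times\bz_{n/|S|}$, and the mod-$|S|$ reduction $\bz_n\to\bz_{|S|}$ is projection onto the first factor with kernel $|S|\bz_n$. Because $S$ has exactly $|S|$ elements, the statement ``$s\not\equiv s'\pmod{|S|}$ for distinct $s,s'\in S$'' is precisely the assertion that this projection is a bijection on $S$, i.e.\ that $S$ is a complete residue system modulo $|S|$. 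The advantage is that all the stabilizer subgroups appearing below sit inside a single CRT factor, so the condition transfers transparently between $\bz_n$ and its quotients.

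For the inductive step, assume $\CS(\bz_n,S)$ has a total perfect code $C$. By Remark~\ref{trans-fac-total} we have $\bz_n=(-C)\oplus S$, and by Remark~\ref{0InC} we may take $0\in C$, so $0\in -C$. Since $n\in N$ the group is good abelian, whence at least one of $-C$ and $S$ is periodic. The base case is $|S|$ a prime power $p^l$: here $(|S|,n/|S|)=1$ forces $p^{l+1}\nmid n$, so Theorem~\ref{pl-circulant} applies directly. (The cases $|S|=pq$ are likewise covered by Theorem~\ref{Periodic-pq} and Lemma~\ref{Lem-pq}, which I would reuse rather than re-prove.) Otherwise I proceed by the two reductions below.

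If $S$ is periodic, let $H=G_S$ be its stabilizer. As $S$ is square-free it avoids the squares $2\bz_n$, and $S+H=S$ forces every $h\in H$ to be even, i.e.\ $H\subseteq 2\bz_n$; consequently the image $S/H$ is again square-free in $\bz_n/H\cong\bz_{n/|H|}$, is aperiodic by Lemma~\ref{GX-aperiodic}, and satisfies $(|S/H|,\,(n/|H|)/|S/H|)=1$ with $|S/H|=|S|/|H|$; one checks that $n/|H|$ again lies in $N$. By Lemma~\ref{factor-GX}, $C/H\in\Tau(\bz_n/H,S/H)$, so the induction hypothesis gives that $S/H$ is a complete residue system modulo $|S|/|H|$, and the CRT identification (with $H$ in the $\bz_{|S|}$-factor) lifts this to the same statement for $S$ modulo $|S|$. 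If instead $S$ is aperiodic, then $-C$ is periodic with stabilizer $H\supseteq\{0\}$ and $H\subseteq -C$. When $|H|=|C|$ we have $-C=H$, the unique subgroup of order $n/|S|$, namely $|S|\bz_n$; then Lemma~\ref{factor-conditions}(b) yields $(S-S)\cap |S|\bz_n=\{0\}$, exactly the desired congruence (this is the subgroup case of Theorem~\ref{k|n-distinct-Th}). When $|H|<|C|$ I pass to $\bz_n/H\cong\bz_{n/|H|}$, where $-C/H$ is aperiodic by Lemma~\ref{GX-aperiodic}, hence $S/H$ is periodic with $|S/H|=|S|$, and I apply the induction hypothesis and lift back through CRT.

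The hard part will be keeping the square-free hypothesis alive in the aperiodic reduction. In the periodic branch the stabilizer automatically consists of squares, but the stabilizer of $-C$ need not; if it absorbs the entire $2$-part of $n$, the quotient order becomes odd, and an odd cyclic group has no nonempty square-free subset, so the induction hypothesis cannot be invoked as stated. Navigating this requires exploiting the parity constraint forced by square-freeness---that $S$ lies among the odd residues, so that in every non-vacuous case $|S|$ is odd and $2\mid|C|$---to argue that the $2$-part cannot be swallowed by the stabilizer unless $-C$ is already a subgroup, the case handled directly above. I expect this $2$-adic bookkeeping, together with the routine verification that each quotient order stays in $N$ and preserves coprimality, to be the principal technical obstacle; by contrast the congruence-lifting is mechanical once the CRT picture is fixed.
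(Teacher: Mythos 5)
Your sufficiency direction and your periodic branch are sound and essentially reproduce the paper's subcases 5.2 and 6.2: when $H=G_S\neq\{0\}$, square-freeness forces $H\subseteq 2\bz_n$ (each $h\in H$ is a difference of two elements of $S$, hence even), so $S/H$ stays square-free, $n/|H|$ stays in $N$ (the set $N$ is divisor-closed), coprimality and connectivity descend, and your congruence lift from $|S|/|H|$ back to $|S|$ is the same computation the paper performs. The genuine gap is the one you yourself flag in the aperiodic branch with $1<|H|<|C|$, where $H=G_{-C}$: there is no reason for $H$ to lie in $2\bz_n$, and if it does not, then $2\bz_n+H=\bz_n$, the quotient $\bz_n/H$ has odd order, every element of it is a square, and the induction hypothesis is simply inapplicable to $(\bz_n/H,\,S/H)$. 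You do not close this gap, and the two ingredients you propose for closing it are both problematic. First, ``$S$ lies among the odd residues, so $|S|$ is odd'' is a non sequitur: a set of odd residues can have even cardinality. The true statement --- that a total perfect code forces $2\mid|C|$ and hence, by coprimality, $|S|$ odd --- needs an argument, e.g.\ counting even elements in $\bz_n=(-C)\oplus S$: every element of $-C+s$ has parity opposite to $c$, so the $n/2$ even elements number $|S|\cdot|\{c\in C: c\ \text{odd}\}|$, forcing $|C|$ even. Second, the claim that the $2$-part of $n$ ``cannot be swallowed by the stabilizer unless $-C$ is already a subgroup'' is unproven, and a priori false for a general cyclic group: if $|C|=2m$ with $m>1$ odd, the subgroup of order $2$ is $\{0,n/2\}$ with $n/2$ odd, so a stabilizer of order $2$ already produces an odd-order quotient while $|H|<|C|$.

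What rescues the argument --- and what the paper's proof actually does --- is a case enumeration over $n\in N$ and the admissible $|S|$: one finds that whenever $|S|$ is odd and composite beyond the cases already delegated to Theorems \ref{pl-circulant} and \ref{Periodic-pq} and Lemma \ref{Lem-pq}, the constraints $n\in N$, $|S|\mid n$, $(|S|,n/|S|)=1$ and $2\mid|C|$ force $|C|$ to equal $2$ (or a power of $2$ in the $p^{\lambda}q$ case), whence $-C$ is a full coset of its stabilizer and you land in your already-handled $|H|=|C|$ subcase (this is precisely the role of Theorem \ref{S/H} and Lemma \ref{|H|=|X|} in the paper). The paper also avoids your odd-quotient trap at the bottom of the recursion because its prime-power workhorses (Theorems \ref{p-circulant} and \ref{pl-circulant}) rest on the polynomial identity of Lemma \ref{LemTotal-q(x)} and the connectivity criterion, not on square-freeness. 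So your induction does not in fact eliminate the case analysis; it only repackages the parts of it that were already unproblematic. To make the proposal a proof you must (i) supply the parity count showing $|S|$ odd, and (ii) replace the unsupported ``$2$-part cannot be swallowed'' claim by the explicit verification, for each shape of $n$ in $N$, that the remaining composite odd $|S|$ force $|C|\in\{2,2^{\lambda}\}$.
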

\begin{proof}
The sufficiency is proved by Lemma \ref{k|n-distinct}.

For the nessecity, let $\CS(\bz_n, S)$ with $(|S|, n/|S|)=1$ which admits a total perfect code $C$. From Remark \ref{trans-fac-total}, we have $\bz_n=(-C)\oplus S$. We want to prove that 
\begin{equation}\label{ssPrime}
s\not\equiv s^{\prime}~ (\mathnormal{mod}\ |S|),~ \text{for distinct}~ s, s^{\prime}\in S.
\end{equation}
We continue the proof according to different values that $n$ can take as follows.

\textsf{Case 1.} $n=p^k$, where $k$ is a positive integer. In this case, we must have $|S|=1$, because otherwise the condition $(|S|, n/|S|)=1$
is not satisfied. So,  $\CS(\bz_n, S)$ is 1-regular and is not connected unless it is $K_2$.

\textsf{Case 2.} $n=p^kq$, where $p$ and $q$ are distinct primes and $k\geqslant 1$. So, $|S|\in \{q, p^k\}$. By applying Theorem \ref{pl-circulant}, the statement \eqref{ssPrime} is proved.

\textsf{Case 3.} $n=p^2q^2$, where $p$ and $q$ are distinct primes. In this case, we have $|S|\in \{p^2, q^2\}$, and so, \eqref{ssPrime} is obtained from Theorem \ref{pl-circulant}.

\textsf{Case 4.} $n=pqr$, where $p$, $q$ and $r$ are distinct primes. So, $|S|\in \{p, q, r, pq, pr, qr\}$ and \eqref{ssPrime} follows from Theorem \ref{p-circulant} and Lemma \ref{Lem-pq}.

\textsf{Case 5.} $n=p^2qr$, where $p$, $q$ and $r$ are distinct primes.
 If $|S|\in \{r, q\}$, then the proof is completed by Theorem \ref{p-circulant}. If $|S|=p^2$ or $|S|=qr$, then \eqref{ssPrime} follows from Theorem \ref{pl-circulant} and Lemma \ref{Lem-pq}, respectively.
 We now suppose that $|S|\in \{p^2q, p^2 r\}$. we will have the following two subcases.
 \begin{itemize}
 \item[(5.1)]
  If $S$ is aperiodic, then
 we obtain from Lemma \ref{factor-conditions} that $n=|C||S|=r|S|$ or $q|S|$. By applying Theorem \ref{S/H}, \eqref{ssPrime} is concluded.
 \item[(5.2)]
 Let $S$ is periodic and $H$ be the subgroup of periods of $S$ in $\bz_n$. We claim that $H\notin S/H$ or $H\cap S=\emptyset$. Suppose to the contrary that $h\in S\cap H$.  Then $2h\in S+h=S$ which contradicts the fact that $S$ is square-free. If $|H|=|S|$, then \eqref{ssPrime} is obtained from Lemma \ref{|H|=|X|}. We now suppose that $|H|\neq |S|$. Since $|H|\mid |S|$, we have $|H|\in\{p, p^2, q, pq, r, pr\}$. Set $k=|S|/|H|$, $l=n/|H|$, $S/H=\{H+t_1, H+t_2, \dots, H+t_k\}$, and $T=\{t_1, t_2, \dots, t_k\}$, where $0<t_i<l$. We can see that $\bz_n/H\simeq \bz _l$ by the group isomorphism $\varphi (H+x)=x$. So, $\CS(\bz_l , T)$ admits a total perfect code by 
 \eqref{Total-Z-Z/H}. Since $k=|T|\in \{p, p^2, q, r, pq, pr\}$,  we conclude by Theorem \ref{pl-circulant}, Theorem \ref{Periodic-pq}, and Lemma \ref{Lem-pq} that $t\not\equiv t^{\prime}~ (\mathnormal{mod}\ k)$ for distinct $t, t^{\prime}\in T$. To obtain \eqref{ssPrime}, let $s\in H+t$ and $s^{\prime}\in H+t^{\prime}$, for some $t, t^{\prime}\in T$.
 If $t=t^{\prime}$, then $s-s^{\prime}=cl$, where $0<|c|<|H|$. Note that $l=kn/|S|$ and $(|S|, n/|S|)=1$, so, $|S|$ does not divide $cl$ and 
$s \not\equiv s^{\prime}~ (\mathnormal{mod}\ |S|)$. If $t\neq t^{\prime}$, then $s-s^{\prime}=cl +t -t^{\prime}$, where $c$ is an integer with $|c|<|H|$. Since $k$ does not divide $ t-t^{\prime}$ and $l=kn/|S|$, we have $s \not\equiv s^{\prime}~ (\mathnormal{mod}\ k)$. So, we obtain
 $s \not\equiv s^{\prime}~ (\mathnormal{mod}\ |S|)$ because $|S|=k|H|$.
 \end{itemize}
 \textsf{Case 6.} $n=pqrs$, where $p$, $q$, $r$ and $s$ are distinct primes. If $|S|\in\{p, q, r, s\}$,  then \eqref{ssPrime} is obtained from Lemma \ref{p-circulant}. If $|S|$ is the multiplication of two of $p$, $q$, $r$ and $s$, then \eqref{ssPrime} follows from Lemma \ref{Lem-pq}. We now suppose  that $|S|$ is as the product of three of $p$, $q$, $r$ and $s$. Without loss of generality, let $|S|=qrs$. So,  we have $|C|=n/|S|=p$ by Lemma \ref{factor-conditions}. We now consider two subcases as follows.
 \begin{itemize}
 \item[(6.1)] If $S$ is aperiodic, then \eqref{ssPrime} is obtained from Theorem \ref{S/H}.
 \item[(6.2)] Let $S$ be periodic and $H$ be the subgroup of periods of $S$ in $\bz_n$ with $|H|>1$. By an argument similar to that of subcase 5.2, we have $H\notin S/H$. If $|S|=|H|$, then \eqref{ssPrime} follows from Lemma \ref{|H|=|X|}. We now suppose that $|S|\neq |H|$. We know that $|H| \mid |S|$. Set $k=|S|/|H|$,  $l=n/|H|$,  $S/H=\{H+t_1, H+t_2, \dots, H+t_k\}$ and $T=\{t_1, t_2, \dots, t_k\}$, where $0< t_i < l$ for each $i$. Since $\bz_n=(-C)\oplus S$, we obtain from Lemma \ref{factor-GX} that $\bz_n/H=\left(-C/H\right)\oplus S/H$,  and so by Remark \ref{trans-fac-total}, we have $C/H\in \Tau (\bz_n/H, S/H)$. Since $\bz_n/H\simeq \bz_l$ by the group isomorphism $\varphi: \bz_n/H\rightarrow \bz_l$ defined by $\varphi(H+x)=x$, we have from Lemma \ref{IsomorphicGroups} that $\CS(\bz_l,T)$ also admits a total perfect code. Note that $|T|=k\in \{q, r, s, qr, qs,rs\}$ and by Theorem \ref{p-circulant} and Lemma \ref{Lem-pq}, we obtain $t \not\equiv t^{\prime}~ (\mathnormal{mod}\ k)$ for distinct $t, t^{\prime}\in T$. The rest of the proof is completed by the same way as subcase 5.2.
 
 \end{itemize}
\end{proof}
\section{Cayley Sum Graphs over Direct Products of Cyclic Groups}\label{Direct-Sec}
In this section,  we aim to generalize the results of the previous section to Cayley sum graphs of the direct product of cyclic groups. Let
\begin{equation}\label{DirectProduct}
G=\bz_{n_1}\times \bz_{n_1}\times \dots \times \bz_{n_d},
\end{equation}
where $n_1, n_2, \dots, n_d\geqslant 2$.  So, $G$ is of the order $n=n_1 n_2\dots n_d$.
Moreover, suppose that $A$ is a non-empty set of $G$ . A polynomial is associated with $A$, similar to \eqref{f_A}, as follows \cite{Cameron-2025}
$$f_A(x_1,\dots, x_d)=\sum _{(a_1,\dots,a_d)\in A}x_1^{a_1}\dots x_d^{a_d},$$
where for any  $(a_1,\dots,a_d)\in A$, $a_i\in \bz_{n_i}$ is an integer between $0$ and $n_i-1$. 
\begin{lemma}
Suppose that $G$ is as defined in \eqref{DirectProduct} and $S\subseteq G$. The Cayley sum graph $\CS(G,S)$ admits $C\subseteq G$ as a total perfect code if and only if  for each pair $(I,g)$, where $\emptyset \neq I \subseteq G$ and $g=(g_1, \dots, g_d)\in G$, there exists a polynomial $q_I^{(g)}(x_1,\dots,x_d)\in \bz [x_1,\dots, x_d]$ divided by $\left( \prod_{i=1}^d x_i^{g_i}\right)\left( \prod_{i\in I} (x_i^{n_i}-1)\right)$ such that
$$f_{-C}(x_1, \dots, x_d)f_{S}(x_1, \dots, x_d)=\sum_{\emptyset \neq I\subseteq \{1,\dots,d\}}\sum_{g\in G}q_I^{(g)}(x_1,\dots,x_d)+
\prod _{i=1}^d\left( \sum_{j=0}^{n_i-1}x_i^j\right).$$
\end{lemma}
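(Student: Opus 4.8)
The plan is to mimic the proof of Lemma \ref{LemTotal-q(x)}, replacing the single relation $x^n\equiv 1$ by the system $x_i^{n_i}\equiv 1$ for $1\le i\le d$ and carefully bookkeeping the monomial reductions. First I would translate the combinatorial condition into a counting statement: by Remark \ref{trans-fac-total}, $C\in\Tau(G,S)$ if and only if $G=(-C)\oplus S$, that is, every $g\in G$ admits a unique representation $g=(-c)+s$ with $c\in C$ and $s\in S$. Writing $N_g$ for the number of such representations, the condition becomes $N_g=1$ for all $g\in G$. On the polynomial side, $f_{-C}f_S=\sum_{c\in C,\,s\in S}\prod_{i=1}^d x_i^{(-c_i)+s_i}$, where each coordinate is taken in $\{0,\dots,n_i-1\}$; hence every exponent $a_i=(-c_i)+s_i$ lies in $\{0,\dots,2n_i-2\}$, so its reduction modulo $n_i$ changes $a_i$ by either $0$ or $n_i$.

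Next I would isolate the correction produced by a single monomial. For a monomial $\prod_i x_i^{a_i}$ occurring in $f_{-C}f_S$, let $g=(g_1,\dots,g_d)$ be its residue ($g_i=a_i$ or $a_i-n_i$) and let $J=\{i:a_i\ge n_i\}$ be the set of coordinates that require reduction. Then $\prod_i x_i^{a_i}=\bigl(\prod_i x_i^{g_i}\bigr)\prod_{i\in J}x_i^{n_i}$, and the elementary inclusion--exclusion identity $\prod_{i\in J}x_i^{n_i}-1=\sum_{\emptyset\ne I\subseteq J}\prod_{i\in I}(x_i^{n_i}-1)$ gives
$$\prod_i x_i^{a_i}-\prod_i x_i^{g_i}=\sum_{\emptyset\ne I\subseteq J}\Bigl(\prod_{i=1}^d x_i^{g_i}\Bigr)\prod_{i\in I}(x_i^{n_i}-1).$$
Since a residue $g$ and a subset $J$ together determine a unique exponent vector, summing this identity over all monomials of $f_{-C}f_S$ (with multiplicities equal to their coefficients) and grouping the outcome by the pair $(I,g)$ yields
$$f_{-C}f_S=\sum_{g\in G}N_g\prod_{i=1}^d x_i^{g_i}+\sum_{\emptyset\ne I\subseteq\{1,\dots,d\}}\sum_{g\in G}q_I^{(g)},$$
where $q_I^{(g)}$ is an integer multiple of $\bigl(\prod_{i=1}^d x_i^{g_i}\bigr)\prod_{i\in I}(x_i^{n_i}-1)$, the integer being the total coefficient of all monomials with residue $g$ whose reduction set contains $I$; in particular each $q_I^{(g)}$ has the required divisibility.

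With this decomposition the equivalence follows. For necessity, $C\in\Tau(G,S)$ gives $N_g=1$ for all $g$, whence $\sum_g N_g\prod_i x_i^{g_i}=\prod_{i=1}^d\bigl(\sum_{j=0}^{n_i-1}x_i^j\bigr)$ and the displayed formula is exactly the asserted identity. For sufficiency, suppose $f_{-C}f_S$ equals $\prod_{i=1}^d\bigl(\sum_{j=0}^{n_i-1}x_i^j\bigr)$ plus a sum of polynomials $q_I^{(g)}$ divisible by $\bigl(\prod_i x_i^{g_i}\bigr)\prod_{i\in I}(x_i^{n_i}-1)$ with $I\ne\emptyset$. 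All such $q_I^{(g)}$ lie in the ideal $(x_1^{n_1}-1,\dots,x_d^{n_d}-1)$, so reducing modulo this ideal (equivalently, passing to the group ring $\bz[G]$) gives $f_{-C}f_S\equiv\prod_{i=1}^d\bigl(\sum_{j=0}^{n_i-1}x_i^j\bigr)$; comparing with $f_{-C}f_S\equiv\sum_g N_g\prod_i x_i^{g_i}$ and using that the reduced monomials $\prod_i x_i^{g_i}$, $0\le g_i<n_i$, form a $\bz$-basis of $\bz[G]$, one concludes $N_g=1$ for every $g$, i.e.\ $C\in\Tau(G,S)$.

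The step I expect to be the main obstacle is the bookkeeping in the middle paragraph: showing that the per-monomial corrections reorganize cleanly into the double sum indexed by $(I,g)$ so that each summand carries the exact divisor $\bigl(\prod_i x_i^{g_i}\bigr)\prod_{i\in I}(x_i^{n_i}-1)$. This hinges on the observation that, because every exponent is below $2n_i$, a residue $g$ and a reduction set $J$ together pin down a single monomial, so no collisions occur and the inclusion--exclusion expansion applies term by term. The converse direction is comparatively routine once one invokes the $\bz$-linear independence of the reduced monomials in $\bz[G]$.
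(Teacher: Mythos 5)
Your proof is correct and follows essentially the same route as the paper, which simply defers the details to the cited lemma of Cameron--Yap--Zhou: both encode the unique-representation condition $G=(-C)\oplus S$ polynomially and reduce each monomial of $f_{-C}f_S$ modulo the relations $x_i^{n_i}=1$, with the identity $\prod_{i\in J}x_i^{n_i}-1=\sum_{\emptyset\ne I\subseteq J}\prod_{i\in I}(x_i^{n_i}-1)$ organizing the corrections into the $(I,g)$-indexed sum. Your write-up is a self-contained, fully detailed version of the argument the paper only cites, and the bookkeeping step you flagged as the main obstacle does go through exactly as you describe.
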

\begin{proof}
Let $C\subseteq G$ be a total perfect code of $\CS(G,S)$. By Remark \ref{trans-fac-total}, we have $G=(-C)\oplus S$. So, every $g\in G$ can be uniquely written as 
$$(g_1, \dots, g_d)=(c_1+ s_1~(\mathnormal{mod}\ n_1),\dots, c_d+ s_d~(\mathnormal{mod} \ n_d) ),$$
where $(c_1,\dots , c_d)\in -C$ and $(s_1,\dots , s_d)\in S$. The remaining part of the necessity proof follows the same argument as in \cite[Lemma 4.1]{Cameron-2025}. The sufficiency part is also similar to the proof of \cite[Lemma 4.1]{Cameron-2025}, with the slight difference that we ultimately conclude that $G=(-C)\oplus S$, and thus from Remark \ref{trans-fac-total}, $C$ is a total perfect code of $G$.
\end{proof}
\begin{lemma}\label{SufficiencyLem}
Suppose that $G$ is as \eqref{DirectProduct} and $S\subseteq G$, where $|S|=m_1\dots m_d$ with $m_i\geqslant 1$ and $m_i\mid n_i$ for each $i\in \{1,\dots ,d\}$. If for each pair of distinct elements $(s_1,\dots, s_d), (s_1^{\prime},\dots, s_d^{\prime})\in S$ there exists at least one $j\in \{1,\dots ,d\}$ with $s_j\not\equiv s_j^{\prime}~(\mathnormal{mod}\ m_j)$, then the connected Cayley sum graph $\CS(G,S)$ admits a total perfect code.
\end{lemma}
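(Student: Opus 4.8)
The plan is to generalize the construction from Lemma~\ref{k|n-distinct}: rather than the subgroup $k\bz_n$, I would produce the explicit subgroup
$$C = m_1\bz_{n_1}\times m_2\bz_{n_2}\times\cdots\times m_d\bz_{n_d}$$
of $G$ and show that it is a total perfect code. Since $m_i\mid n_i$, each factor $m_i\bz_{n_i}$ has order $n_i/m_i$, so $|C|=\prod_{i=1}^d(n_i/m_i)=n/\prod_{i=1}^d m_i=n/|S|$. As $G$ is abelian, $S$ is automatically a normal subset, so by Lemma~\ref{transv-fac} it suffices to establish the factorization $G=C\oplus S$; the connectivity hypothesis plays no role in the construction itself.

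To prove $G=C\oplus S$ I would invoke Lemma~\ref{factor-conditions} and check conditions (b) and (c). Condition (c) is the order identity $|C|\,|S|=(n/|S|)|S|=n=|G|$, already recorded above. For condition (b), the key observation is that $C$ is exactly the kernel of the coordinatewise reduction homomorphism $\rho\colon G\to\bz_{m_1}\times\cdots\times\bz_{m_d}$ given by $\rho(g_1,\dots,g_d)=(g_1\bmod m_1,\dots,g_d\bmod m_d)$. Hence for $s,s'\in S$ one has $s-s'\in C$ if and only if $s_j\equiv s_j'\pmod{m_j}$ for every $j$. The hypothesis says that distinct elements of $S$ disagree modulo $m_j$ in at least one coordinate, so no nonzero element of $S-S$ lies in $C$, i.e.\ $(S-S)\cap C=\{0\}$. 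Because $C$ is a subgroup we have $C-C=C$, and therefore $(C-C)\cap(S-S)=\{0\}$, which is precisely condition (b).

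With (b) and (c) verified, Lemma~\ref{factor-conditions} gives $G=C\oplus S$, and Lemma~\ref{transv-fac} then certifies that $C$ is a total perfect code of $\CS(G,S)$ of degree $|S|$. I do not expect a genuine obstacle here, since this is a faithful lift of the one-dimensional argument; the only step demanding care is recognizing $C$ as $\ker\rho$, so that the coordinatewise distinctness hypothesis translates cleanly into $(S-S)\cap C=\{0\}$. As a cross-check, one could instead argue directly as in Lemma~\ref{k|n-distinct}: the hypothesis makes $\rho|_S$ injective, and since $|S|=|\bz_{m_1}\times\cdots\times\bz_{m_d}|$ it is in fact a bijection, so every $g\in G$ is matched to a unique $s\in S$ with $g\equiv s$ coordinatewise modulo the $m_i$, yielding the unique decomposition $g=s+(g-s)$ with $g-s\in C$.
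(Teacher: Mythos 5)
Your proposal is correct and follows essentially the same route as the paper: both take $C=(m_1\bz_{n_1})\times\cdots\times(m_d\bz_{n_d})$, verify conditions (b) and (c) of Lemma~\ref{factor-conditions} to get the factorization $G=C\oplus S$, and conclude via Lemma~\ref{transv-fac} (the paper phrases this with $-C$ and Remark~\ref{trans-fac-total}, which is the same thing since $C$ is a subgroup). The only difference is that you spell out the verification of $(C-C)\cap(S-S)=\{0\}$ via the kernel of the coordinatewise reduction map, whereas the paper defers this computation to an external reference.
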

\begin{proof}
Set $$C=(m_1\bz _{n_1})\times \dots \times (m_d\bz _{n_d}).$$
By an argument similar to that of \cite[Lemma 4.2]{Cameron-2025}, we have $|G|=|-C||S|$ and 
$$\left((-C)-(-C)\right)\cap (S-S)=\{(0,\dots , 0)\}.$$
So from Lemma \ref{factor-conditions}, we obtain $G=(-C)\oplus S$. Therefore, we conclude that $C$ is a total perfect code of $\CS(G,S)$ by Remark \ref{trans-fac-total}.
\end{proof}
We see in the following example that the sufficient condition in Lemma \ref{SufficiencyLem} is not always necessary.
\begin{example}
Let $G=\bz _4 \times \bz _4$ and  $S=\{ (0,1), (1,1), (1,3), (3,2)\}$.  We can check that $\left\langle S\right\rangle =G$ and $\left\langle S-S\right\rangle =G$, and so from Lemma \ref{Connectivity-CS},  the Cayley sum graph $\CS(G, S)$ is connected which admits $C=\{(0,1), (1,2),(2,3), (3,0)\}$ as a total perfect code, see Figure \ref{Fig1}. We can factorize $|S|$ in two ways as $m_1m_2$ such that $m_1 \mid n_1$ and $m_2 \mid n_2$, namely $(m_1,m_2)=(1,4)$ and $(m_1,m_2)=(4,1)$. In the first case, we have
$(1-0 ~(\mathnormal{mod}\ 1), 1-1~ (\mathnormal{mod}\ 4))=(0,0)$ and in the second case, we obtain $(1-1 ~(\mathnormal{mod}\ 4), 3-1 ~ (\mathnormal{mod}\ 1))=(0,0)$.
\end{example}
\begin{figure}[h] 
\centering
\begin{tikzpicture}[scale=.5, transform shape]

\node [draw, shape=rectangle, minimum width=1cm, minimum height=1cm,fill=green] (v1) at (7.00,-0.00) {$(2,3)$};
\node [draw, shape=rectangle, minimum width=1cm, minimum height=1cm,fill=orange] (v2) at (-0.00,-7.00) {$(0,1)$};
\node [draw, shape=rectangle, minimum width=1cm, minimum height=1cm,fill=orange] (v3) at (-0.00,7.00) {$(1,2)$};
\node [draw, shape=rectangle, minimum width=1cm, minimum height=1cm,fill=green] (v4) at (-7.00,-0.00) {$(3,0)$};

\node [draw, shape=circle,fill=orange] (v5) at (3.00,-6.50) {$(0,0)$};
\node [draw, shape=circle,fill=orange] (v6) at (5.00,-4.90) {$(3,1)$};
\node [draw, shape=circle,fill=green] (v7) at (6.80,-2.80) {$(3,2)$};
\node [draw, shape=circle,fill=green] (v8) at (6.50,2.50) {$(2,2)$};
\node [draw, shape=circle,fill=green] (v9) at (5.30,4.50) {$(1,3)$};
\node [draw, shape=circle,fill=orange] (v10) at (3.20,6.30) {$(3,3)$};
\node [draw, shape=circle,fill=orange] (v11) at (-3.00,6.30) {$(2,0)$};
\node [draw, shape=circle,fill=orange] (v12) at (-5.10,4.70) {$(0,3)$};
\node [draw, shape=circle,fill=green] (v13) at (-6.50,2.50) {$(2,1)$};
\node [draw, shape=circle,fill=green] (v14) at (-6.50,-2.60) {$(1,1)$};
\node [draw, shape=circle,fill=green] (v15) at (-5.00,-4.80) {$(0,2)$};
\node [draw, shape=circle,fill=orange] (v16) at (-2.80,-6.30) {$(1,0)$};

\draw[line width=1pt,color=orange, bend right=20] (0.56,-7.00) to (2.34,-6.50);
\draw[line width=1pt,color=orange, bend right=60] (0.55,-7.50) to (5.00,-4.90);
\draw[line width=1pt,color=orange, bend left=20] (-0.56,-7.00) to (-2.18,-6.40);
\draw[line width=1pt,color=orange, bend left=20] (-0.56,-7.00) to (-2.18,-6.40);
\draw[line width=1pt,color=orange] (-0.07,-6.50) -- (0.07,6.50);
\draw[line width=1pt,color=gray, bend right=60] (3.60,-6.30) to (6.60,-3.45);
\draw[line width=1pt,color=gray] (3.20,-5.90) -- (5.00,3.90);
\draw[line width=1pt,color=gray] (2.60,-6.00) -- (-5.85,-2.60);
\draw[line width=1pt,color=gray] (5.00,-4.20) -- (6.20,1.90);
\draw[line width=1pt,color=gray] (4.45,-4.50) -- (-2.20,-6.00);
\draw[line width=1pt,color=gray] (4.70,-4.30) -- (-2.90,5.60);
\draw[line width=1pt,color=gray] (-2.70,-5.60)-- (6.00,2.00);
\draw[line width=1pt,color=gray] (-3.10,-5.70)-- (-5.10,4.00);
\draw[line width=1pt,color=green, bend right=20] (6.80,-2.10) to (7.00,-0.50);
\draw[line width=1pt,color=green, bend right=20] (7.00,0.50) to (6.50,1.80);
\draw[line width=1pt,color=green, bend right=80] (7.60,0.40) to (6.00,4.50);
\draw[line width=1pt,color=green] (6.40,0.10) -- (-6.40,-0.00);
\draw[line width=1pt,color=gray] (6.40,-2.20) -- (5.30,3.80);
\draw[line width=1pt,color=gray] (6.40,-2.20) -- (5.30,3.80);
\draw[line width=1pt,color=gray] (6.20,-2.40) -- (-5.80,2.50);
\draw[line width=1pt,color=gray, bend right=80] (6.50,3.20) to (3.90,6.30);
\draw[line width=1pt,color=gray] (4.60,4.40)--(-4.60,-4.20);
\draw[line width=1pt,color=orange, bend right=20] (2.55,6.60) to (0.60,7.00);
\draw[line width=1pt,color=orange, bend right=20] (-0.60,7.00) to (-2.34,6.50);
\draw[line width=1pt,color=orange, bend right=80] (-0.20,7.50) to (-5.00,4.30);
\draw[line width=1pt,color=gray, bend right=80] (3.10,7.00) to (-2.70,6.90);
\draw[line width=1pt,color=gray] (2.55,6.10) -- (-4.40,4.80);
\draw[line width=1pt,color=gray, bend right=80] (-3.40,6.82) to (-6.50,3.20);
\draw[line width=1pt,color=gray] (-5.40,4.10) -- (-5.00,-4.10);
\draw[line width=1pt,color=green, bend right=20] (-6.80,1.90) to (-7.00,0.54);
\draw[line width=1pt,color=green, bend right=20] (-7.00,-0.53) to (-6.90,-2.00);
\draw[line width=1pt,color=green, bend right=80] (-7.60,-0.00) to (-5.70,-4.80);
\draw[line width=1pt,color=gray, bend right=80] (-7.20,2.30) to (-7.10,-2.30);
\draw[line width=1pt,color=gray, bend right=20] (-6.50,-3.30) to (-5.50,-4.30);

\end{tikzpicture}
\caption{ $\mathrm{CS}(\bz_4\times \bz_4, \{ (0,1), (1,1), (1,3), (3,2)\})$}\label{Fig1}
\end{figure}

\begin{theorem}\label{Product-P}
Let $G$ be as \eqref{DirectProduct} and $S\subseteq G$, where $|S|$ is an odd prime and divides exactly one of $n_1, \dots, n_d$, say $n_t$.
The connected Cayley sum graph $\CS(G,S)$ admits a total perfect code if and only if $s_t\not\equiv s_t^{\prime}~(\mathnormal{mod}\ p)$, for each pair of distinct elements $(s_1,\dots, s_d), (s_1^{\prime},\dots, s_d^{\prime})\in S$. 
\end{theorem}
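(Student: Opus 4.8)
The plan is to prove the two implications separately, disposing of sufficiency in a line and concentrating the work on necessity.

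For sufficiency I would simply invoke Lemma \ref{SufficiencyLem} with the trivial factorization $|S| = m_1\cdots m_d$ given by $m_t = p$ and $m_i = 1$ for $i\ne t$ (admissible, since $p\mid n_t$ and $1\mid n_i$). For distinct elements of $S$ the congruence $s_j\equiv s_j'~(\mathnormal{mod}\ m_j)$ holds automatically whenever $j\ne t$, so the hypothesis $s_t\not\equiv s_t'~(\mathnormal{mod}\ p)$ supplies exactly the index $j=t$ that Lemma \ref{SufficiencyLem} asks for; hence $C=(m_1\bz_{n_1})\times\cdots\times(m_d\bz_{n_d})$ is a total perfect code.

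For necessity, assume $\CS(G,S)$ is connected and admits a total perfect code $C$. By Remark \ref{trans-fac-total} we have $G=(-C)\oplus S$, and Lemma \ref{factor-conditions} gives $|C|=n/p$. I would feed this factorization into the multivariate polynomial identity proved just above and substitute $x_i=1$ for every $i\ne t$, keeping $x_t=x$. Every summand $q_I^{(g)}$ with $I\ne\{t\}$ then carries a factor $x_i^{n_i}-1\mapsto 0$ and drops out, so, writing $M=\prod_{i\ne t}n_i$, $\tilde f_S(x)=\sum_{s\in S}x^{s_t}$ and $\tilde f_{-C}(x)=\sum_{c\in C}x^{(-c)_t}$, one is left with the single-variable relation
$$\tilde f_{-C}(x)\,\tilde f_S(x)\equiv M\left(1+x+\cdots+x^{n_t-1}\right)\pmod{x^{n_t}-1}.$$
Evaluating at a primitive $p$-th root of unity $\zeta$ (legitimate since $p\mid n_t$, so $\sum_{j=0}^{n_t-1}\zeta^{j}=0$) yields $\tilde f_{-C}(\zeta)\tilde f_S(\zeta)=0$. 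As these values depend only on the $t$-th coordinates modulo $p$ and $1+x+\cdots+x^{p-1}$ is the minimal polynomial of $\zeta$, either $\tilde f_S(\zeta)=0$—which, because $|S|=p$, forces each residue modulo $p$ to be attained exactly once among the $s_t$, i.e. the sought distinctness—or $\tilde f_{-C}(\zeta)=0$, meaning the $t$-th coordinates of $-C$ are equidistributed modulo $p$ (so in particular $p^2\mid n_t$).

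What remains is to exclude this second alternative and, more generally, every ``intermediate'' distribution of the $s_t$ modulo $p$; here I would mirror the proof of Theorem \ref{p-circulant}. If all $s_t$ are congruent modulo $p$, then $S-S$ lies in the index-$p$ kernel of the reduction $G\to\bz_p$, so $|G:\langle S-S\rangle|\geqslant p>2$, contradicting connectivity by Lemma \ref{Connectivity-CS}—this kills the fully degenerate case exactly as cases (i)--(ii) there. The delicate point is that connectivity alone does \emph{not} rule out the intermediate distributions, and this is where the hypothesis that $p$ divides exactly one $n_t$ is essential: it makes the Sylow $p$-subgroup of $G$ cyclic (isomorphic to $\bz_{p^a}$, $a=v_p(n_t)$), so the $p$-local part of the factorization is governed by the cyclic situation. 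I would make this precise by passing to the quotient by the stabilizer $G_{-C}$ (Lemma \ref{factor-GX}, which preserves $|S/G_{-C}|=p$ and makes $-C/G_{-C}$ aperiodic via Lemma \ref{GX-aperiodic}), invoking the prime-degree rigidity of Lemma \ref{factor-prime} to force $S/G_{-C}$ to be periodic, hence a coset of the order-$p$ subgroup, and then transferring the resulting distinctness back to $S$ as in the proofs of Theorem \ref{S/H} and Lemma \ref{Lem-pq}. The main obstacle is precisely this last step: Lemma \ref{factor-prime} is stated only for the cyclic group $\bz_n$, whereas here $G$ is a genuine direct product whose $p'$-part need not be cyclic, so it cannot be applied verbatim. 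The crux is thus to re-establish that a prime-order factor still forces a periodic factor in $G$—which I expect to obtain by using the cyclic Sylow $p$-subgroup to reduce the $p$-adic behaviour to the cyclic Theorem \ref{p-circulant} through the isomorphism $\bz_{n_t}\cong\bz_{p^a}\times\bz_{n_t/p^a}$ of Lemma \ref{IsomorphicGroups}—and to track the mod-$p$ residues faithfully through that isomorphism and the quotient by $G_{-C}$; excluding the intermediate distributions, and not merely the constant one, is exactly the step demanding this extra structural input beyond connectivity.
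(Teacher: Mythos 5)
Your sufficiency argument is exactly the paper's (Lemma \ref{SufficiencyLem} with $m_t=p$, $m_i=1$ otherwise), and your reduction of the multivariate identity to the single-variable congruence $\tilde f_{-C}(x)\tilde f_S(x)\equiv M(1+x+\cdots+x^{n_t-1}) \pmod{x^{n_t}-1}$ is sound. The problem is that evaluating only at a primitive $p$-th root of unity $\zeta$ gives you a strictly weaker dichotomy than the one the proof needs: in the branch $\tilde f_{-C}(\zeta)=0$ (available whenever $p^2\mid n_t$), the residue distribution of the $s_t$ modulo $p$ is completely unconstrained, and you yourself concede that connectivity only kills the ``all residues equal'' sub-case while the intermediate distributions remain. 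Your proposed repair --- extending the prime-order periodicity result of Lemma \ref{factor-prime} from $\bz_n$ to the direct product $G$ via the cyclic Sylow $p$-subgroup --- is stated as an expectation (``which I expect to obtain by\dots''), not carried out, and as you note Lemma \ref{factor-prime} does not apply verbatim to non-cyclic $G$. So the necessity direction has a genuine gap at exactly the step you flag as the crux.

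The paper sidesteps this entirely: its necessity proof runs the full polynomial argument of \cite[Theorem 4.3]{Cameron-2025} (itself modelled on \cite[Theorem 1.1]{Feng-2017}), whose Claim 2 exploits the identity modulo \emph{all} relevant cyclotomic factors of $x^{n_t}-1$, not just $\Phi_p$, and outputs directly that either the residues $r_i=s_t^{(i)}\bmod p$ are pairwise distinct or $p$ divides every coefficient of $\sum_{i=1}^p x^{r_i}$; since these coefficients are nonnegative and sum to $p$, the latter forces all $r_i$ equal. Only then does the connectivity dichotomy (all $r_i=0$ contradicts $\langle S\rangle=G$; all $r_i=r\ne 0$ gives $|G:\langle S-S\rangle|\geqslant p>2$) finish the proof. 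To close your gap you should import that stronger Claim 2 (or reproduce its derivation of $p\mid\sum_i x^{r_i}$), rather than attempt a periodicity argument for factorizations of the non-cyclic group $G$.
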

\begin{proof}
The sufficiency follows from Lemma \ref{SufficiencyLem}. The necessity part is proved in the same way as \cite[Theorem 4.3]{Cameron-2025} but with $S_0$ replaced by $S$ and differs only in Claim 2, where  we obtain $p \mid \sum_{i=1}^px^{r_i}$ and we need to consider the following items. 
\begin{itemize}
\item For all $1\leqslant i\leqslant p$, $r_i=0$. In this case, we have $s_1^{(i)}=k_ip$, for some integer $k_i$. So, all $s_1^{(i)}$ are multiplications of $p$ and $S$ cannot generate $G$ which contradicts the connectivity of $\CS(G,S)$, see Lemma \ref{Connectivity-CS}.
\item For all $1\leqslant i\leqslant p$, $r_i=r\neq 0$. Hence, we obtain $s_1^{(i)}=k_ip+r$ for some integer $k_i$. Therefore, $\left\langle S-S\right\rangle$ is the set of elements whose first coordinate is a multiplication of $p$ in $\bz_{n_1}$. So, the maximum value that $|\left\langle S-S\right\rangle|$ can take is $|G|/p$ and we obtain
$$|G: \left\langle S-S\right\rangle |\geqslant p.$$
Since $p$ is an odd prime, we have $|G: \left\langle S-S\right\rangle |>2$ which contradicts Lemma \ref{Connectivity-CS}.
\end{itemize}
\end{proof}
The following example shows that  the condition that $|S|$ divides only one $n_i$, $0\leqslant i \leqslant d$, in Theorem \ref{Product-P} cannot be omitted.
\begin{example}
Suppose that $G=\bz _3 \times \bz _6$  and $S=\{(0,3), (0,1),(1,1)\}$ is a subset of $G$. It is easy to check that $\left\langle S \right\rangle =G$ and $|G:\left\langle S-S \right\rangle|\leqslant 2$. So, the Cayley sum graph $\CS (G,S)$ is connected by Lemma \ref{Connectivity-CS}. Also, $\CS (G,S)$ admits $C=\{(0,0),(0,1), (1,2), (1,3), (2,4), (2,5)\}$ as a total perfect code, see Figure \ref{Fig2}. We have $|S|\mid n_1$ and $|S|\mid n_2$. We can see that $s_1^{(1)}\equiv s_1^{(2)} ~(\mathnormal{mod}\ |S|)$ and $s_2^{(2)}\equiv s_2^{(3)} ~(\mathnormal{mod}\ |S|)$, for $S^{(1)}=(s_1^{(1)},s_2^{(1)})=(0,3)$,   $S^{(2)}=(s_1^{(2)},s_2^{(2)})=(0,1)$ and $S^{(3)}=(s_1^{(3)},s_2^{(3)})=(1,1)$.
\end{example}
\begin{figure}[h]
\centering
\begin{tikzpicture}[scale=.5, transform shape]

\node [draw, shape=rectangle, minimum width=1cm, minimum height=1cm,fill=green] at  (-6.0,0) {$(2,4)$};
\node [draw, shape=rectangle, minimum width=1cm, minimum height=1cm,fill=orange] at  (-3.49,5.09) {$(0,0)$};
\node [draw, shape=rectangle, minimum width=1cm, minimum height=1cm,fill=orange] at  (3.49,5.09) {$(0,1)$};
\node [draw, shape=circle,fill=green] at  (-5.79,1.74) {$(1,5)$};
\node [draw, shape=circle,fill=orange] at  (-5.09,3.42) {$(1,1)$};
\node [draw, shape=circle,fill=pink] at  (5.79,1.74) {$(2,1)$};
\node [draw, shape=circle,fill=orange] at  (5.09,3.42) {$(0,2)$};
\node [draw, shape=rectangle, minimum width=1cm, minimum height=1cm,fill=pink] at  (6.0,0) {$(1,2)$};
\node [draw, shape=circle,fill=orange] at  (-1.19,5.5) {$(0,3)$};
\node [draw, shape=circle,fill=orange] at  (1.19,5.5) {$(1,0)$};
\node [draw, shape=rectangle, minimum width=1cm, minimum height=1cm,fill=pink] at  (3.49,-5.09) {$(2,5)$};
\node [draw, shape=rectangle, minimum width=1cm, minimum height=1cm,fill=green] at  (-3.49,-5.09) {$(1,3)$};
\node [draw, shape=circle,fill=pink] at  (1.19,-5.94) {$(2,2)$};
\node [draw, shape=circle,fill=green] at  (-1.19,-5.94) {$(0,4)$};
\node [draw, shape=circle,fill=pink] at  (5.79,-1.74) {$(0,5)$};
\node [draw, shape=circle,fill=pink] at  (5.09,-3.42) {$(1,4)$};
\node [draw, shape=circle,fill=green] at  (-5.79,-1.74) {$(2,3)$};
\node [draw, shape=circle,fill=green] at  (-5.09,-3.42) {$(2,0)$};

\draw[line width=1pt,color=orange, bend left=70]  (-3.2,5.55) to  (3.49,5.55);
\draw[line width=1pt,color=orange, bend left=20]  (-2.9,5) to   (-1.75,5.85);
\draw[line width=1pt,color=orange, bend right=20]  (-3.4,4.58) to   (-4.54,3.75);
\draw[line width=1pt,color=orange, bend left=20]  (1.7,5.92) to   (2.9,5.2);
\draw[line width=1pt,color=orange, bend left=20]  (3.67,4.6) to   (4.7,3.88);
\draw[line width=1pt,color=gray]  (-4.88,2.75) -- (-4.95,-2.75);
\draw[line width=1pt,color=gray]  (-4.67,2.85) -- (1,-5.3);
\draw[line width=1pt,color=gray]   (-1.19,4.8) -- (-1.19,-5.27);
\draw[line width=1pt,color=gray]   (-0.7,5) -- (4.8,-2.8);
\draw[line width=1pt,color=gray]   (1.1,4.83) -- (-5.4,-1.19);
\draw[line width=1pt,color=gray]   (1.3,4.83) -- (5.3,2.2);
\draw[line width=1pt,color=gray]   (4.46,3.2) -- (-5.1,1.6);
\draw[line width=1pt,color=gray]   (4.7,2.9) -- (5.34,-1.2);
\draw[line width=1pt,color=green, bend right=70]   (-6.59,0) to  (-4.1,-5.09);
\draw[line width=1pt,color=gray]   (-5.5,1.15) --  (0.6,-5.6);
\draw[line width=1pt,color=green, bend right=20]  (-5.86,1.1) to  (-6.0,0.45);
\draw[line width=1pt,color=green, bend right=20]  (-6.0,-0.45) to   (-5.79,-1.3);
\draw[line width=1pt,color=green, bend left=20]  (-4.1,-4.9) to   (-5.23,-4.1);
\draw[line width=1pt,color=green, bend right=20]  (-2.87,-5.3) to    (-1.85,-5.94);
\draw[line width=1pt,color=gray]  (-4.5,-3) --  (5.12,1.7);
\draw[line width=1pt,color=gray]   (-5.1,-1.74) -- (4.4,-3.42);
\draw[line width=1pt,color=gray]   (-0.7,-5.45) -- (5.2,-2);
\draw[line width=1pt,color=pink, bend left=70]   (6.6,-0.07) to  (4.07,-5.09);
\draw[line width=1pt,color=pink, bend left=20]   (6.0,-0.46) to   (5.79,-1.1);
\draw[line width=1pt,color=pink, bend right=20]   (6.0,0.48) to (5.79,1.1);
\draw[line width=1pt,color=pink, bend right=20]  (4.05,-4.81) to (5.09,-4.1);
\draw[line width=1pt,color=pink, bend left=20]  (3.14,-5.6) to (1.87,-5.94);
\end{tikzpicture}
\caption{ $\mathrm{CS}(\bz_3\times \bz_6, \{ (0,3), (0,1), (1,1)\})$}\label{Fig2}
\end{figure}

\begin{theorem}
Let $G$ be as \eqref{DirectProduct} and $S\subseteq G$, where $|S|=p^l$ is a prime power with $p^l \mid n$ and $p^{l+1}\nmid n$. Moreover, suppose that $p$ divides exactly one of $n_1, \dots, n_d$, say $n_t$. The connected Cayley sum graph $\CS(G,S)$ admits a total perfect code if and only if $s_t\not\equiv s_t^{\prime}~(\mathnormal{mod}\ p^l)$, for each pair of distinct elements $(s_1,\dots, s_d), (s_1^{\prime},\dots, s_d^{\prime})\in S$.
\end{theorem}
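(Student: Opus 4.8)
The plan is to establish both implications, using the structural observation that, since $p$ divides exactly one modulus $n_t$ and $p^l\mid n$ while $p^{l+1}\nmid n$, we have $p^l\,\|\,n_t$; consequently the hypothesis that $s_t\not\equiv s_t'~(\mathnormal{mod}\ p^l)$ for distinct $s,s'\in S$ is equivalent, because $|S|=p^l$, to saying that the $t$-th coordinates of the elements of $S$ form a complete residue system modulo $p^l$. This spectral reformulation is what makes the polynomial method effective.

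For the sufficiency I would simply invoke Lemma \ref{SufficiencyLem} with the factorization $|S|=m_1\cdots m_d$ given by $m_t=p^l$ and $m_i=1$ for $i\neq t$. Since $m_i=1$ makes the $i$-th congruence vacuous for every $i\neq t$, the separating coordinate guaranteed by that lemma must be $j=t$, and its hypothesis reduces exactly to $s_t\not\equiv s_t'~(\mathnormal{mod}\ p^l)$. Hence $C=\bz_{n_1}\times\cdots\times p^l\bz_{n_t}\times\cdots\times\bz_{n_d}$ is a total perfect code.

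For the necessity, suppose $\CS(G,S)$ admits a total perfect code $C$, so $G=(-C)\oplus S$ by Remark \ref{trans-fac-total} and $|{-C}|=n/p^l$ by Lemma \ref{factor-conditions}. The key first step is to localize the multivariate identity of this section's opening lemma at coordinate $t$: substituting $x_i=1$ for all $i\neq t$ annihilates every term $q_I^{(g)}$ whose index set $I$ meets $\{1,\dots,d\}\setminus\{t\}$, since then some factor $x_i^{n_i}-1$ vanishes. Writing $u:=x_t$ and $N_t:=n/n_t$, this leaves the single-variable congruence
\[
\Bigl(\sum_{c\in -C}u^{c_t}\Bigr)\Bigl(\sum_{s\in S}u^{s_t}\Bigr)\equiv N_t\bigl(1+u+\cdots+u^{n_t-1}\bigr)\pmod{u^{n_t}-1}.
\]
Evaluating at a primitive $p^{j}$-th root of unity for $1\leqslant j\leqslant l$ (these exist because $p^l\mid n_t$) makes the right-hand side vanish, so for each such $j$ the irreducible cyclotomic polynomial $\Phi_{p^j}$ divides at least one of the two $t$-coordinate generating functions. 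As distinctness modulo $p^l$ is precisely the assertion that $\Phi_{p^j}$ divides the $S$-factor for every $j$, I would then run the prime-power descent of \cite[Theorem 1.2]{Feng-2017} exactly as in the proof of Theorem \ref{pl-circulant}, combined with the direct-product bookkeeping of \cite[Theorem 4.3]{Cameron-2025} used for Theorem \ref{Product-P}: one shows that if $\Phi_{p^j}$ divided the $(-C)$-factor for some $j$, then the $t$-coordinates of $S$ would collapse into a single class modulo $p$, forcing either $\langle S\rangle\neq G$ or $|G:\langle S-S\rangle|\geqslant p>2$, each contradicting connectivity by Lemma \ref{Connectivity-CS}.

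The main obstacle is the prime-power descent itself. In the prime case $l=1$ a single vanishing at a $p$-th root of unity immediately forces the $t$-coordinates into one residue class, but for $l>1$ one must track the whole tower $\Phi_{p},\Phi_{p^2},\dots,\Phi_{p^l}$ and argue, by induction on $j$, which factor each of them divides. Moreover the Cayley-sum features—that $0$ need not lie in $S$, that the relevant factor is $-C$ rather than $C$, and that connectivity here demands only $|G:\langle S-S\rangle|\leqslant 2$ rather than equality—mean the degenerate cases of Feng's descent have to be re-derived in the present setting rather than quoted verbatim, and this is the step that genuinely requires care.
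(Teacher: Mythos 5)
Your proposal follows essentially the same route as the paper: sufficiency via Lemma \ref{SufficiencyLem} with $m_t=p^l$ and $m_i=1$ for $i\neq t$, and necessity by reducing the multivariate polynomial identity to the $t$-th coordinate and running the prime-power root-of-unity descent of \cite[Theorem 1.2]{Feng-2017} as adapted in \cite[Theorem 4.3]{Cameron-2025}, with the degenerate cases dispatched by the connectivity criterion of Lemma \ref{Connectivity-CS}. The paper's own proof is exactly this (stated in two lines, deferring to the cited sources), so your more detailed account, including the Cayley-sum adjustments you flag, matches its approach.
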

\begin{proof}
The sufficiency is proved by Lemma \ref{SufficiencyLem}. The necessity part follows from the same way as \cite[Theorem 4.3]{Cameron-2025} but with $S_0$ replaced by $S$.
\end{proof}
  \section*{Acknowledgement}
  The first author was supported by the University of Mazandaran, Grant Number 60673.

\end{document}